\newcommand{\ignore}[1]{}
\newcommand{\1}{\mathbf{1}}
\newcommand{\0}{\mathbf{0}}
\newtheorem{theorem}{Theorem}
\newtheorem{PR}[theorem]{Proposition}
\newtheorem{CN}[theorem]{Conjecture}
\newtheorem{RE}[theorem]{Remark}
\newtheorem{DE}[theorem]{Definition}
\newtheorem{EG}[theorem]{Example}
\newcounter{claim_nb}[theorem]
\newtheorem{claim}[claim_nb]{Claim}
\newtheorem*{claim*}{Claim}
\newtheorem*{subclaim*}{Subclaim}
\newcounter{claim_nbs}[section]
\newcounter{subclaim_nb}[claim_nbs]
\newenvironment{cproof}
{\begin{proof}
 [Proof of Claim.]
 \vspace{-1.2\parsep}}
{\renewcommand{\qed}{\hfill $\Diamond$} \end{proof}}
\newif\ifnotes\notestrue
\newcommand{\notename}[2]{{\textcolor{red}{\footnotesize{\bf (#1:} {#2}{\bf ) }}}}
\newcommand{\anote}[1]{{\notename{Ahmad}{#1}}}
\newcommand{\ginote}[1]{{\notename{Giacomo}{#1}}}
\newcommand{\genote}[1]{{\notename{Gerard}{#1}}}
\newcommand{\notename}[2]{{}}
\newcommand{\genote}[1]{}
\newcommand{\anote}[1]{}
\newcommand{\ginote}[1]{}
\title{Arc connectivity and submodular flows in digraphs}
\author{Ahmad Abdi \and G\'{e}rard Cornu\'{e}jols \and Giacomo Zambelli}
\begin{document}

\maketitle

\begin{abstract} Let $D=(V,A)$ be a digraph. For an integer $k\geq 1$, a \emph{$k$-arc-connected flip} is an arc subset of $D$ such that after reversing the arcs in it the digraph becomes (strongly) $k$-arc-connected.

The first main result of this paper introduces a sufficient condition for the existence of a $k$-arc-connected flip that is also a submodular flow for a crossing submodular function.
More specifically, given some integer $\tau\geq 1$, suppose $d_A^+(U)+(\frac{\tau}{k}-1)d_A^-(U)\geq \tau$ for all $U\subsetneq V, U\neq \emptyset$, where $d_A^+(U)$ and $d_A^-(U)$ denote the number of arcs in $A$ leaving and entering $U$, respectively. Let $\mathcal{C}$ be a crossing family over ground set $V$, and let $f:\mathcal{C}\to \mathbb{Z}$ be a crossing submodular function such that $f(U)\geq \frac{k}{\tau}(d_A^+(U)-d_A^-(U))$ for all $U\in \mathcal{C}$. Then $D$ has a $k$-arc-connected flip $J$ such that $f(U)\geq d_J^+(U)-d_J^-(U)$ for all $U\in \mathcal{C}$.
The result has several applications to Graph Orientations and Combinatorial Optimization. In particular, it strengthens Nash-Williams' so-called \emph{weak orientation theorem}, and proves a weaker variant of Woodall's conjecture on digraphs whose underlying undirected graph is $\tau$-edge-connected.

The second main result of this paper is even more general. It introduces a sufficient condition for the existence of capacitated integral solutions to the intersection of two submodular flow systems. This sufficient condition implies the classic result of Edmonds and Giles on the box-total dual integrality of a submodular flow system. It also has the consequence that in a weakly connected digraph, the intersection of two submodular flow systems is totally dual integral.\\

\noindent {\bf Keywords:} graph orientation, $k$-arc-connected flip, weak orientation theorem, Woodall's conjecture, submodular flows, total dual integrality
\end{abstract}

\section{Introduction}\label{sec:intro}

Graph Orientation is a rich area of Graph Theory. The basic problem consists in orienting the edges of an undirected graph in order to obtain a $k$-arc-connected digraph, and giving conditions under which such an orientation exists.
Various constraints on the orientation can be imposed, leading to an extensive literature in the area; for examples, see \cite{Frank80}, Chapter 9 of \cite{Frank11}, and Chapter 61 of \cite{Schrijver03}.
One can set up the basic problem equivalently in terms of digraphs, which is more appropriate for this paper: start from a digraph and flip the orientation of some of the arcs in order to obtain desired connectivity properties.

Let $D=(V,A)$ be a digraph, fixed throughout the rest of the introduction, unless stated otherwise. For $U\subseteq V$ denote by $\delta_D^+(U)$ and $\delta_D^-(U)$ the sets of arcs leaving and entering $U$, respectively. We shall drop the subscript $D$ whenever it is clear from the context. For $J\subseteq A$ and $U\subseteq V$, denote $d_J^+(U):=|\delta^+(U)\cap J|$ and $d_J^-(U):=|\delta^-(U)\cap J|$.

\begin{DE}\label{def:k-arc-connected-flip}
For an integer $k\geq 1$, a \emph{$k$-arc-connected flip of $D=(V,A)$} is a subset $J\subseteq A$ such that after flipping the arcs of $J$ the digraph becomes $k$-arc-connected, that is, $d_J^+(U)+d_A^-(U)-d_J^-(U)\geq k$ for all $U
\subsetneq V, U\neq \emptyset$, or equivalently, switching the roles of $U$ and $V\setminus U$, $d_J^+(U) -d_J^-(U)\leq d_A^+(U) - k$ for all $U \subsetneq V, U\neq \emptyset$.
\end{DE}

An important result is Nash-Williams' \emph{weak orientation theorem}, stating that there exists a $k$-arc-connected flip if, and only if, the underlying undirected graph of $D$ is $2k$-edge-connected (\cite{Nash-Williams69}, also see~\cite{Frank80,Ito23}).
Our main theorem strengthens (the nontrivial direction of) the weak orientation theorem in two ways. To state it we need to borrow a few notions from Submodular Optimization.

Let $\mathcal{C}$ be a family of subsets of $V$. Then $\mathcal{C}$ is a \emph{crossing family} over \emph{ground set} $V$ if, for all $U,W\in \mathcal{C}$ such that $U\cap W\neq \emptyset,U\cup W\neq V$, we have $U\cap W, U\cup W\in\mathcal{C}$. A function $f:\mathcal{C}\to \mathbb{R}$ is \emph{crossing submodular} over $\mathcal{C}$ if, for all $U,W\in \mathcal{C}$ such that $U\cap W\neq \emptyset, U\cup W\neq V$, we have  $f(U\cap W)+f(U\cup W)\leq f(U)+f(W)$. If we have $\geq$ or $=$ instead, then $f$ is a crossing \emph{supermodular} function or a crossing \emph{modular} function, respectively. For instance, $\mathcal{C}_1=\{U\subsetneq V:U\neq \emptyset\}$ is a crossing family, and $f_1:\mathcal{C}_1\to \mathbb{Z}$ defined as $f_1(U)=d_A^+(U)~\forall U\in \mathcal{C}_1$ is a crossing submodular function. 
Another important example of a crossing family is $\mathcal{C}_2=\{U\subsetneq V: \delta_D^-(U)=\emptyset,U\neq \emptyset\}$, for which $f_2:\mathcal{C}_2\to \mathbb{Z}$ defined as $f_2(U)= d_A^+(U)~\forall U\in \mathcal{C}_2$ is a crossing modular function. More generally, given $w\in \mathbb{R}^A$, the function $f_3:2^V\to \mathbb{R}$ defined as $f(U)=w(\delta^+(U))-w(\delta^-(U))$ is crossing modular.

Let $\mathcal{C}$ be a crossing family over ground set $V$, and let $f:\mathcal{C}\to \mathbb{R}$ be a crossing submodular function. The linear system $y(\delta^+(U))-y(\delta^-(U))\leq f(U)~ \forall U\in \mathcal{C}$ is called a \emph{submodular flow system}, and every feasible solution is called a \emph{submodular flow}. Observe that for the crossing submodular function $f_4:\mathcal{C}_1\to \mathbb{Z}$ defined as $f_4(U)=f_1(U)-k~\forall C\in \mathcal{C}_1$, the incidence vector of a $k$-arc-connected flip of $D$ is a submodular flow, by \Cref{def:k-arc-connected-flip}.

\paragraph{Main result I.}
The following theorem introduces a sufficient condition for the existence of a $k$-arc-connected flip whose incidence vector is also a submodular flow for another crossing submodular function.

\begin{restatable}{theorem}{mainone}\label{main-1}
Let $\tau,k\geq 1$ be integers.
Let $D=(V,A)$ be a digraph where $d_A^+(U)+(\frac{\tau}{k}-1)d_A^-(U)\geq \tau$ for all $U\subsetneq V, U\neq \emptyset$. Let $\mathcal{C}$ be a crossing family over ground set $V$, and let $f:\mathcal{C}\to \mathbb{Z}$ be a crossing submodular function such that $f(U)\geq \frac{k}{\tau}(d_A^+(U)-d_A^-(U))$ for all $U\in \mathcal{C}$. Then $D$ has a $k$-arc-connected flip $J$ such that $f(U)\geq d_J^+(U)-d_J^-(U)$ for all $U\in \mathcal{C}$.
\end{restatable}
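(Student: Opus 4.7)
}
My plan is to reformulate the conclusion as the existence of a $\{0,1\}$-valued solution to a box-constrained intersection of two submodular flow systems, exhibit a natural fractional feasible solution, and then appeal to the integrality statement advertised as the second main theorem of the paper. Concretely, I want to find $y\in\{0,1\}^A$ (interpreted as the incidence vector of $J$) satisfying
\begin{align*}
y(\delta^+(U))-y(\delta^-(U)) &\le f(U) \quad \text{for all } U\in\mathcal{C},\\
y(\delta^+(U))-y(\delta^-(U)) &\le d_A^+(U)-k \quad \text{for all } U\subsetneq V,\ U\ne\emptyset,\\
\mathbf{0}\le y &\le \mathbf{1}.
\end{align*}
The first family is a submodular flow system by hypothesis. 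For the second, recall from the excerpt that $U\mapsto d_A^+(U)$ is crossing submodular on $\mathcal{C}_1=\{U\subsetneq V : U\ne\emptyset\}$; hence so is $U\mapsto d_A^+(U)-k$. Thus the whole system is the intersection of two submodular flow systems with $\{0,1\}$ capacities.

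The second step is to verify that the fractional point $y^\star:=\tfrac{k}{\tau}\mathbf{1}\in[0,1]^A$ is feasible. For the $f$-constraint,
\[
y^\star(\delta^+(U))-y^\star(\delta^-(U))=\tfrac{k}{\tau}\bigl(d_A^+(U)-d_A^-(U)\bigr)\le f(U)
\]
holds on $\mathcal{C}$ by the hypothesis on $f$. For the flip constraint, I need
$\tfrac{k}{\tau}(d_A^+(U)-d_A^-(U))\le d_A^+(U)-k$, which after clearing denominators and regrouping is equivalent to
\[
d_A^+(U)+\Bigl(\tfrac{\tau}{k}-1\Bigr)d_A^-(U)\ge \tau,
\]
applied to the set $V\setminus U$; this is precisely the standing assumption. (If $\tau<k$, the assumption applied to a singleton forces $\tau/k\le 1$ through a degree argument, so $y^\star\in[0,1]^A$ is automatic; otherwise $k\le\tau$ directly.)

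The final step is to convert this fractional certificate into an integral one. This is exactly the situation handled by the paper's second main result: the existence of an integer feasible point for the capacitated intersection of two submodular flow systems, under a sufficient condition that (given a feasible fractional point inside the box) our instance clearly meets. Invoking that theorem produces $y\in\{0,1\}^A$ satisfying both inequalities, whose support $J$ is then a $k$-arc-connected flip with $d_J^+(U)-d_J^-(U)\le f(U)$ on $\mathcal{C}$.

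\paragraph{Main obstacles.} The routine part is the algebraic verification that $y^\star=\tfrac{k}{\tau}\mathbf{1}$ lies in the LP relaxation; the non-trivial content is entirely off-loaded to the second main theorem. The real difficulty therefore lies earlier in the paper, namely in proving that the intersection of two submodular flow systems admits capacitated integral solutions whenever the relaxation is feasible; an auxiliary issue is handling digraphs that are not weakly connected, which I expect to be reducible to the weakly connected case by treating components separately since $\mathcal{C}$ is a crossing family and the flip/$f$-constraints decompose accordingly.
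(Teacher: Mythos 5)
Your approach is the same as the paper's: take $\mathcal{C}_1:=\mathcal{C}$, $f_1:=f$, $\mathcal{C}_2:=\{U\subsetneq V: U\neq\emptyset\}$, $f_2(U):=d_A^+(U)-k$ (crossing submodular), use $\tfrac{k}{\tau}\mathbf{1}$ as a feasibility certificate, and invoke Theorem~\ref{main-0}(b) with $\ell=\mathbf{0}$, $u=\mathbf{1}$; the algebra you do for the two constraint families matches the paper line for line. However, your parenthetical claim that the sufficient condition for Theorem~\ref{main-0}(b) is ``clearly met (given a feasible fractional point inside the box)'' reflects a misunderstanding you should correct: the cut condition~\eqref{eq:cut-condition} is a separate hypothesis on $\ell,u$ and the $f_i$ that is neither implied by, nor equivalent to, the LP relaxation having a fractional point in $[\ell,u]$ (consider trivially large $f_i$). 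In the present instance it does hold, but you must verify it: with $\ell=\mathbf{0},u=\mathbf{1}$ the right-hand side of~\eqref{eq:cut-condition} is $d_A^+(U)$, and $\min_{i}f_i(U)\le f_2(U)=d_A^+(U)-k\le d_A^+(U)$. Two smaller remarks: the worry about $\tau<k$ is a red herring, since $\tfrac{k}{\tau}\mathbf{1}$ need only certify $P\neq\emptyset$ (Theorem~\ref{main-0}(b) asks for a non-empty face of $P$, not a fractional point in the box); and the ``auxiliary issue'' of disconnected digraphs never arises, because the degree hypothesis already rules out any $U$ with $\delta^+(U)=\delta^-(U)=\emptyset$ (such a $U$ would give $0\geq\tau\geq 1$), so the condition $\min_i f_i(U)\le 0$ for disconnecting $U$ holds vacuously.
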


We shall prove \Cref{main-1} in \S\ref{sec:main-0-apps}. We shall discuss the complexity aspects following the theorem in \S\ref{sec:complexity}. As we will see, the inequalities on the cuts of $D$ above, can be verified in strongly polynomial time. Moreover, under some standard conditions on how $\mathcal{C}$ and $f$ are provided, $J$ can also be found in oracle strongly polynomial time.

Let us say a few more words about the inequalities on the cuts of $D$ in \Cref{main-1}. They are imposed for the simple reason that they are needed for our proof to work. That said, they possess some nice properties.

First, as a sanity check, we note that the inequalities readily imply that the underlying undirected graph is $2k$-edge-connected: for every $U\subsetneq V,U\neq \emptyset$, by adding the two inequalities corresponding to $U$ and $V\setminus U$, we get that $\frac{\tau}{k}(d^+_A(U)+d^-_A(U))\geq 2\tau$, implying in turn that $d^+_A(U)+d^-_A(U)\geq 2k$.

Secondly, we note that the inequalities are equivalent to asking that $\bar{y}=\frac{k}{\tau}\1$ satisfies $y(\delta^+(U)) -y(\delta^-(U))\geq k-d_A^-(U)$ for all $U\subsetneq V$, $U\neq \emptyset$. As $k$ increases, the inequalities become more strict. For $k=1$ the inequalities ask precisely that every \emph{dicut} (a cut with all arcs crossing in the same direction) has size at least $\tau$ (see \Cref{origin}), while for $k=\lfloor \tau/2\rfloor$ the inequalities \emph{almost} ask that every cut has size at least~$\tau$ (see \Cref{disjoint-dijoins}).

In \S\ref{sec:main-1-apps} we shall see applications of \Cref{main-1} to Graph Orientations and Combinatorial Optimization. For instance, we see that for $\tau=2k$ this result strengthens the weak orientation theorem and its near-Eulerian sharpening, and for $k=1$ it reduces to a recent result on decomposing $A$ into a \emph{dijoin} and a \emph{$(\tau-1)$-dijoin}~\cite{Abdi22+}. Other applications of \Cref{main-1} include an extension of the weak orientation theorem in a different direction than above, a weaker version of \emph{Woodall's conjecture} for digraphs with a $\tau$-edge-connected underlying undirected graph~\cite{Woodall78}, and a theorem on disjoint dijoins in $0,1$-weighted digraphs.

\paragraph{Main result II.} \Cref{main-1} is proved by utilizing a result on submodular flows. To explain it, we need a few notions from Integer Programming.
Let $A\in \mathbb{Q}^{m\times n}$ and $b\in \mathbb{Q}^m$. The linear system $Ax\leq b$ is \emph{totally dual integral (TDI)} if
for each $w\in \mathbb{Z}^n$, the dual of the linear program $\max\{w^\top x:Ax\leq b\}$ has an integral optimal solution whenever the LP admits an optimum~\cite{Edmonds77}. The system $Ax\leq b$ is \emph{box-TDI} if the system $Ax\leq b, \ell\leq x\leq u$ is TDI, for all $\ell,u\in \mathbb{Z}^n$ such that $\ell\leq u$.
An important result is that if $Ax\leq b$ is TDI and  $b\in \mathbb{Z}^m$, then $\{x:Ax\leq b\}$ is an \emph{integral} polyhedron, that is, every non-empty face of it contains an integral point~\cite{Hoffman74,Edmonds77}. In particular, if $Ax\leq b$ is box-TDI and $b\in \mathbb{Z}^m$, then $\{x:Ax\leq b\}$ is a \emph{box-integral} polyhedron, that is, $\{x:Ax\leq b,\ell\leq x\leq u\}=\{x:Ax\leq b\}\cap [\ell,u]$ is an integral polyhedron for all $\ell,u\in \mathbb{Z}^n$ such that $\ell\leq u$.

A classic result of Edmonds and Giles states that a submodular flow system is box-TDI~\cite{Edmonds77}. This important theorem laid the basis for numerous min-max theorems and polynomial and strongly polynomial algorithms for submodular flows. For in-depth surveys see \cite{Schrijver84,Frank88}, and for a more recent treatment we recommend Chapter 60 of \cite{Schrijver03} and Chapter 16 of \cite{Frank11}.

In contrast, given two crossing submodular functions $f_i:\mathcal{C}_i\to \mathbb{Z},i=1,2$ defined over (possibly different) crossing families $\mathcal{C}_i,i=1,2$ over the same ground set, the combined system
$y(\delta^+(U))-y(\delta^-(U))\leq f_i(U)~\forall~U\in \mathcal{C}_i,~i=1,2$, is not box-TDI and not even integral, as \Cref{bad-example} at the end of this section shows. Furthermore, finding an integral solution to the system includes NP-complete problems; see \S\ref{sec:two-subs} in the appendix for details.

Against this backdrop, \Cref{main-1} is significant since it provides a $0,1$ solution to the intersection of two submodular flow systems. The theorem is a consequence of the following result, which provides a sufficient condition for the existence of capacitated integral solutions to the intersection of two submodular flow systems.

\begin{theorem}\label{main-0}
Let $D=(V,A)$ be a digraph. For $i=1,2$, let $\mathcal{C}_i$ be a crossing family over ground set $V$, and let $f_i:\mathcal{C}_i\to \mathbb{Z}$ be a crossing submodular function, where $\min_{i=1,2}f_i(U)\leq 0$ for all $U\subsetneq V,U\neq \emptyset$ such that $\delta^+(U)=\delta^-(U)=\emptyset$.\footnote{We follow the convention that $f_i(U)=+\infty$ if $U\notin\mathcal{C}_i$.} Let
\begin{equation}\label{eq:sub_flow_intersection}
P:=\left\{y\in \mathbb{R}^A : y(\delta^+(U))-y(\delta^-(U))\leq f_i(U)~ \forall U\in \mathcal{C}_i,\, i=1,2\right\}.
\end{equation} Then the following statements hold: \begin{enumerate}[a.]
\item The system \eqref{eq:sub_flow_intersection} defining $P$ is TDI. In particular, $P$ is an integral polyhedron.
\item For every $\ell\in (\mathbb{Z}\cup\{-\infty\})^A$, $u\in (\mathbb{Z}\cup\{+\infty\})^A$ satisfying $\ell\leq u$ and the following \emph{cut condition}:
\begin{equation}\label{eq:cut-condition}
\min_{i=1,2} f_i(U)\leq u(\delta^+(U))-\ell(\delta^-(U))\quad \forall U\subsetneq V,\, U\neq \emptyset,
\end{equation} we have that every non-empty face of $P$ contains $y^\star\in \mathbb{Z}^A$ satisfying $\ell\leq y^\star\leq u$.
\end{enumerate}
\end{theorem}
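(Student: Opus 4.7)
The theorem generalizes the Edmonds-Giles box-TDI theorem from one crossing submodular flow system to the intersection of two. My plan is to prove part (b) directly, and to derive (a) from it by taking $\ell \equiv -\infty$ and $u \equiv +\infty$ (so \eqref{eq:cut-condition} becomes vacuous) to obtain primal integrality of $P$, then reading integer dual multipliers off of the combinatorial construction used for (b) to upgrade to TDI.

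\emph{Step 1 (uncrossing within each family).} Fix a non-empty face $F$ of $P$; it is cut out by requiring a subset of the defining inequalities of $P$ to hold with equality. For $i=1,2$, let $\mathcal{F}_i \subseteq \mathcal{C}_i$ collect the sets $U$ whose $f_i$-inequality is tight on $F$. If $U,W \in \mathcal{F}_i$ cross, then combining the tightness of $U$ and $W$, the modular identities for $d^+$ and $d^-$, feasibility of any $y\in F$ at $U\cap W$ and $U\cup W$, and the crossing submodularity of $f_i$, one deduces $f_i(U\cap W)+f_i(U\cup W)=f_i(U)+f_i(W)$, so $U\cap W$ and $U\cup W$ may replace $U,W$ in $\mathcal{F}_i$ without enlarging $F$. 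Iterating, I may assume each $\mathcal{F}_i$ is cross-free.

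\emph{Step 2 (reduction to an integer flow in an auxiliary digraph).} A cross-free family on $V$ corresponds canonically to a directed tree whose arcs are in bijection with the family; thus $\mathcal{F}_1,\mathcal{F}_2$ give two trees $T_1,T_2$, and the tight equations defining $F$ become flow-balance equations on the arcs of $T_1$ and $T_2$ with right-hand sides $f_1, f_2$. I would then build an auxiliary digraph $H$ by splitting each $v\in V$ into copies that thread through the $T_1$- and $T_2$-gadgets, routing the arcs of $A$ between the appropriate split copies, and equipping $H$ with arc capacities derived from $\ell,u,f_1,f_2$. The aim is that integer feasible circulations in $H$ correspond bijectively to integer $y^\star \in F$ with $\ell\leq y^\star\leq u$.

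\emph{Step 3 (Hoffman's circulation theorem).} Verify that the cut condition \eqref{eq:cut-condition} translates, via a standard min-cut computation on $H$, to exactly Hoffman's feasibility condition for integer circulations in $H$; then Hoffman yields the required $y^\star$, and pulling back the integer dual certificates of the min-cut LP in $H$ gives the integer dual multipliers needed to conclude part (a).

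\textbf{Main obstacle.} The heart of the argument lies in Step 2 together with the cut-condition matching in Step 3. In the single-family Edmonds-Giles setting, the tight constraint matrix after uncrossing is (the transpose of) a network matrix and hence totally unimodular, whereas here the stack of two network matrices (one per family) is in general not totally unimodular, so integrality must be recovered through the flow reformulation rather than from a direct TU argument. The asymmetric $\min$-structure of \eqref{eq:cut-condition} — only one of $f_1(U),f_2(U)$ needs to accommodate the box at each $U$ — indicates that the split-vertex gadgets in $H$ must allow re-routing between the two families at each vertex, in such a way that a min-cut of $H$ corresponds to selecting $\min_{i=1,2} f_i(U)$ for each $U \subsetneq V$. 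Designing this gadget correctly, and checking that integrality of the resulting circulation yields integrality of $y^\star$ and not merely of some auxiliary flow, is where I expect the bulk of the technical work to lie.
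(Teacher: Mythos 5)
Your proposal is not the paper's route, and more importantly it has a gap at its core that I do not think can be filled in the way you outline. The paper does not stay in arc space and does not build a flow gadget from two cross-free trees. Instead it \emph{projects} the problem to vertex space: for $y\in\mathbb{R}^A$ it sets $x:=My$ (with $M$ the node--arc incidence matrix), observes that the constraints $y(\delta^+(U))-y(\delta^-(U))\le f_i(U)$ become $x(U)\le f_i(U)$ together with $\1^\top x=0$, and thereby reduces part (b) to: (i) the box-TDI of the intersection of \emph{two crossing submodular base systems} in $\mathbb{R}^V$ (the paper's Theorem 4, a form of polymatroid intersection), which yields an integral $b$ in the image face $\widetilde F$ with $b(U)\le\min_i f_i(U)$ for all $U$; and then (ii) a capacitated integer $b$-transshipment theorem (the paper's Theorem 5) to pull $b$ back to an integral $y^\star\in F$ with $\ell\le y^\star\le u$, with the cut condition \eqref{eq:cut-condition} being exactly the Hoffman-type feasibility condition for that pullback. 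For part (a), integrality and total unimodularity of $M$ are used to transfer integer dual multipliers for the base-system LP to integer dual multipliers for the original LP.

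The gap in your plan is Step 2/Step 3. You hope to encode the two uncrossed families $\mathcal{F}_1,\mathcal{F}_2$ into a single auxiliary digraph $H$ so that Hoffman's circulation theorem produces $y^\star$. But the intersection of two base systems is genuinely polymatroid intersection, not a single-commodity flow problem, and this is exactly the ingredient your plan is missing. Even for \emph{one} crossing submodular function, a submodular flow polyhedron is not a circulation polyhedron (the right-hand sides are submodular, not modular); the tight constraint matrix after uncrossing being a network matrix controls vertex integrality, but it does not turn the feasibility problem into a circulation problem with modular capacities, and for two families you yourself note the stacked matrix is not TU. The paper's Appendix also shows that $P\cap[\ell,u]$ has fractional vertices even when \eqref{eq:cut-condition} holds; a clean bijection between integer circulations in some $H$ and integer points of $F\cap[\ell,u]$, as you describe, would be in tension with that phenomenon. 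In short, the ``split-vertex gadget'' you defer to is where the real content sits, and I believe what is actually needed there is not a gadget at all but the base-system intersection theorem applied in vertex space, followed by a separate transshipment step --- precisely the paper's factorization. Your Step 1 (uncrossing each $\mathcal{F}_i$) is correct and standard, but it does not by itself make the two families compatible.
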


We prove \Cref{main-0} in \S\ref{sec:main-0-proof}, and provide a delicate extension of it afterwards. We discuss the complexity aspects of the theorem in \S\ref{sec:complexity}. We do not know whether condition \eqref{eq:cut-condition} can be verified in polynomial time. However, as we shall see, under standard assumptions on how the crossing families $\mathcal{C}_i$ and the submodular functions $f_i$, $i=1,2$, are provided, there exists an oracle strongly polynomial time algorithm that returns  either a vector $y^\star$ as in the statement, or a subset $U$ violating the cut condition~\eqref{eq:cut-condition}.

It must be pointed out that, in contrast to the box-TDI-ness of submodular flow systems, the system \eqref{eq:sub_flow_intersection} is not box-TDI. In fact, $P\cap [\ell,u]$ is not necessarily integral even if $\ell,u$ satisfy the cut condition \eqref{eq:cut-condition}; for an example see \S\ref{sec:fractional} of the appendix.

Finally, let us say a few words about the first set of inequalities imposed on $\min_{i=1,2}f_i(U)$.
The condition that $\min_{i=1,2}f_i(U)\leq 0$ for all $U\subsetneq V,U\neq \emptyset$ such that $\delta^+(U)=\delta^-(U)=\emptyset$, is necessary for $P$ to be an integral polyhedron, as demonstrated by \Cref{bad-example} below. Note that if $P\neq \emptyset$, then the inequalities are equivalent to $\min_{i=1,2}f_i(U)= 0$ for all $U\subsetneq V,U\neq \emptyset$ such that $\delta^+(U)=\delta^-(U)=\emptyset$. Note further that these inequalities are implied by the cut condition \eqref{eq:cut-condition}.

\begin{EG}\label{bad-example}
Consider the digraph $D^\star$ with vertices $1,2,3,1',2',3'$ and arcs $a:=11',b:=22',c:=33'$. Let $\mathcal{C}^\star_1:=\{\{1,2\},\{1,2,3,1'\}\}$ and $\mathcal{C}^\star_2:=\{\{1,2\},\{1,2,3,2'\}\}$, which are clearly crossing families. Let $f^\star_1(\{1,2\})=f^\star_1(\{1,2,3,1'\})=1$ and $f^\star_2(\{1,2\})=f^\star_2(\{1,2,3,2'\})=1$, which yield integer-valued crossing submodular functions over $\mathcal{C}^\star_1,\mathcal{C}^\star_2$, respectively. Then the corresponding system \eqref{eq:sub_flow_intersection} is $y_a+y_b\leq 1, y_b+y_c\leq 1, y_c+y_a\leq 1$. This system is not integral, and therefore not box-TDI, as $(0.5,0.5,0.5)$ is a vertex of the polyhedron.
\end{EG}

\paragraph{Outline of the paper.} In \S\ref{sec:main-0-proof} we prove \Cref{main-0}, and provide an extension of it afterwards. In \S\ref{sec:main-0-apps} we present three applications of \Cref{main-0}: the original result of Edmonds and Giles, an application to digraphs with a connected underlying undirected graph, and of course \Cref{main-1}. In \S\ref{sec:main-1-apps} we present applications of \Cref{main-1}. In \S\ref{sec:complexity} we discuss the complexity aspects of the two main results. Finally, in \S\ref{sec:conclusion} we conclude with some open questions.

\section{Intersection of two submodular flow systems}\label{sec:main-0-proof}

In this section we prove \Cref{main-0}, for which we need two ingredients from Submodular Optimization and Network Flows. First we need the following result, essentially stating that the intersection of two base systems is box-TDI.

\begin{theorem}[see Theorem 49.8 of \cite{Schrijver03}, and \S14.4 of \cite{Frank11}]\label{box-TDI}
For $i=1,2$, let $\mathcal{C}_i$ be a crossing family over ground set $V$, let $f_i:\mathcal{C}_i\to \mathbb{Z}$ be a crossing submodular function, and let $k$ be an integer. Then the system $x(V)=k$; $x(U)\leq f_1(U)~\forall U\in \mathcal{C}_1$; $x(U)\leq f_2(U)~\forall U\in \mathcal{C}_2$ is box-TDI, and therefore defines a box-integral polyhedron.
\end{theorem}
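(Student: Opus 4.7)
The plan is to reduce the statement, via two standard tools from submodular optimisation, to the box-TDI-ness of the intersection of two base polyhedra on the full power set $2^V$, and then to invoke (or reprove by uncrossing) that classical fact.

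The first reduction turns each of the two crossing submodular systems into an intersecting submodular system defining the same polyhedron. Using the fixed-sum equation $x(V)=k$, a constraint $x(U)\le f_i(U)$ is equivalent to $x(V\setminus U)\ge k-f_i(U)$. For each $i$, I would partition the sets in $\mathcal{C}_i$ into two classes and replace each set in one class by its complement (with $f_i(U)$ correspondingly replaced by $k-f_i(U)$); the classes are chosen so that, using crossing submodularity of $f_i$, the resulting family $\mathcal{C}'_i$ is intersecting and the resulting function $f'_i:\mathcal{C}'_i\to\mathbb{Z}$ is intersecting submodular. The polyhedron in the theorem, together with $x(V)=k$, is unchanged by the operation.

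The second reduction extends each intersecting submodular $f'_i$ to a fully submodular function $\hat f_i:2^V\to\mathbb{Z}\cup\{+\infty\}$ via the Dilworth truncation. A standard property of Dilworth truncation guarantees that $\hat f_i$ is integer valued where finite and that its base polyhedron coincides with the polyhedron defined by $x(V)=k$ and $x(U)\le f'_i(U)$ for $U\in\mathcal{C}'_i$. After both reductions, the task is to show that for two integer-valued, fully submodular functions $\hat f_1,\hat f_2:2^V\to\mathbb{Z}$ with $\hat f_1(V)=\hat f_2(V)=k$, the system $x(V)=k$, $x(U)\le \hat f_i(U)$ for all $U\subseteq V$ and $i=1,2$, is box-TDI. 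For this last step, fix $w\in\mathbb{Z}^V$ and integer bounds $\ell\le u$, and take a dual optimum with multipliers $y_i$ for the family-$i$ constraints and $\alpha,\beta$ for the box constraints. The plan is to uncross each $y_i$ separately: any crossing pair $U,W$ in the support of $y_i$ is replaced by $U\cap W, U\cup W$, using the inequality $\hat f_i(U\cap W)+\hat f_i(U\cup W)\le \hat f_i(U)+\hat f_i(W)$, which preserves primal feasibility while weakly improving the dual objective. After uncrossing, each $y_i$ is supported on a laminar family, and the polymatroid intersection theorem of Edmonds — proved via the primal--dual augmenting-path exchange argument — yields an integer dual optimum for the uncrossed system, which then recovers an integer dual optimum of the original system.

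The step I expect to be the main obstacle is the box-TDI-ness in the last stage: although each $y_i$ can be uncrossed into a laminar family, the combined constraint matrix of two laminar incidence matrices is not in general totally unimodular, so integrality of the dual has to be extracted from the combinatorial exchange argument for matroid intersection rather than from a direct argument on the constraint matrix. The first reduction also requires care: the replacement $U\mapsto V\setminus U$ must be performed consistently across $\mathcal{C}_i$ so that intersecting submodularity of $f'_i$ is genuinely obtained, and the existence of such a consistent choice is exactly where the crossing (rather than merely intersecting) hypothesis on $f_i$ is used, through the identity $f_i(U)+f_i(W)\ge f_i(U\cap W)+f_i(U\cup W)$ applied to pairs with $U\cup W\ne V$ and its complementary counterpart under the $x(V)=k$ equation.
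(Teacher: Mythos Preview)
The paper does not prove this theorem; it is quoted from the literature (Schrijver, Theorem~49.8; Frank, \S14.4) as a known ingredient in the proof of \Cref{main-0}. There is thus no paper proof to compare against, and what follows is an assessment of your sketch on its own.

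Your first reduction contains a concrete error. You correctly observe that, under $x(V)=k$, the constraint $x(U)\le f_i(U)$ is equivalent to $x(V\setminus U)\ge k-f_i(U)$. But this is a \emph{lower} bound on $x(V\setminus U)$, not an upper bound. Replacing a subclass of $\mathcal{C}_i$ by complements therefore does not yield a single intersecting submodular function $f'_i$ with constraints $x(\cdot)\le f'_i(\cdot)$: on the complemented part you obtain an intersecting \emph{supermodular} function with constraints $x(\cdot)\ge\cdot$. What each $i$ actually produces is a paramodular pair (a generalized polymatroid description), not an intersecting submodular one, and the Dilworth-truncation step you describe next does not apply to that object as stated. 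The standard route is either to work with g-polymatroid intersection directly, or to pass from ``crossing submodular plus $x(V)=k$'' to a fully submodular description of the same base polyhedron by the construction in the cited references, and only then invoke Edmonds' polymatroid intersection theorem.

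Your third step is also muddled. Having arrived at two fully submodular functions, you uncross each dual $y_i$ onto a laminar family and then invoke ``the polymatroid intersection theorem of Edmonds'' to get an integral dual. But box-TDI-ness of the intersection of two base polyhedra \emph{is} the relevant form of that theorem, so this is circular: if you are citing it as a black box, the uncrossing is superfluous; if you are reproving it, you must finish the argument after uncrossing. Moreover, your stated obstacle is not one: the $0$--$1$ matrix whose rows are $\chi^U$ for $U$ ranging over the union of two laminar families \emph{is} totally unimodular (it is a network matrix), and together with the identity rows coming from the box constraints this gives the integral dual directly. No combinatorial exchange argument is needed.
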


Given a digraph $D=(V,A)$ and $b\in \mathbb{R}^V$, a \emph{$b$-transshipment} is a vector $y\in \mathbb{R}^A$ such that $y(\delta^+(v))-y(\delta^-(v))=b_v$ for every $v\in V$. Note that if there exists a $b$-transshipment, then $\1^\top b=0$ necessarily holds. We need the following result which characterizes the existence of capacity constrained integral $b$-transshipments.

\begin{theorem}[see Corollary 11.2f of \cite{Schrijver03}]\label{b-transshipment}
Let $D=(V,A)$ be a digraph. Let $b\in \mathbb{Z}^V$, $\ell\in (\mathbb{Z}\cup\{-\infty\})^A$, and $u\in (\mathbb{Z}\cup\{+\infty\})^A$ such that $\1^\top b=0$ and $\ell\leq u$. Then there exists a $b$-transshipment $y\in \mathbb{Z}^A$ such that $\ell\leq y\leq u$ if, and only if, $b(U)\leq u(\delta^+(U))-\ell(\delta^-(U))$ for all $U\subsetneq V, U\neq \emptyset$.
\end{theorem}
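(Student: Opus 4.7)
The necessity direction is immediate: given feasible $y \in \mathbb{Z}^A$ with $\ell \leq y \leq u$, summing the flow-balance equation $y(\delta^+(v)) - y(\delta^-(v)) = b_v$ over $v \in U$ causes the contributions of arcs internal to $U$ to cancel, yielding $b(U) = y(\delta^+(U)) - y(\delta^-(U))$, which is at most $u(\delta^+(U)) - \ell(\delta^-(U))$ by the bounds on $y$.

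For sufficiency my plan is to reduce, in three steps, to the integer max-flow min-cut theorem. First I would handle the infinite bounds by replacing each $\ell_a = -\infty$ or $u_a = +\infty$ with a sufficiently large integer $M$. This is legitimate because cut inequalities whose right-hand sides were previously $+\infty$ become trivially satisfied for any large enough $M$ (there are only finitely many subsets of $V$), while cuts whose right-hand sides were finite are unaffected by the replacement. Second, I would translate via $y_a = z_a + \ell_a$, arriving at the task of finding an integer $z$ with $0 \leq z \leq c := u - \ell$ satisfying $z(\delta^+(v)) - z(\delta^-(v)) = b'_v$ for every $v$, where $b'_v := b_v - \ell(\delta^+(v)) + \ell(\delta^-(v))$; a short computation using $\sum_v \ell(\delta^+(v)) = \sum_v \ell(\delta^-(v))$ shows that $\1^\top b' = 0$.

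Third, I would construct an auxiliary digraph by adjoining a super-source $s$ and super-sink $t$: for each $v$ with $b'_v > 0$ attach an arc $(s,v)$ of capacity $b'_v$, and for each $v$ with $b'_v < 0$ attach an arc $(v,t)$ of capacity $-b'_v$. A feasible integer $z$ is in bijection with an integer $s$-$t$ flow in the augmented digraph that saturates every super-source and super-sink arc, hence has value $K := \sum_{v:\, b'_v > 0} b'_v$. By the integer max-flow min-cut theorem, such a flow exists if and only if every $s$-$t$ cut has capacity at least $K$.

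The final step is to verify that the hypothesis implies this min-cut lower bound. Every $s$-$t$ cut is determined by some $S \subseteq V$ (take the $s$-side to be $\{s\} \cup S$), and has capacity
$$c(\delta^+(S)) + \sum_{v \in V \setminus S,\, b'_v > 0} b'_v + \sum_{v \in S,\, b'_v < 0} (-b'_v).$$
Using the decomposition $b'_v = \max(b'_v,0) - \max(-b'_v,0)$ together with $\1^\top b' = 0$, the inequality ``capacity $\geq K$'' collapses to $b'(S) \leq c(\delta^+(S))$, which after undoing the substitution is precisely $b(S) \leq u(\delta^+(S)) - \ell(\delta^-(S))$, the assumption for proper nonempty $S$ (the cases $S=\emptyset$ and $S=V$ being automatic). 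The main obstacle will be the bookkeeping through the three reductions — shifting by $\ell$, eliminating infinite bounds, and translating between the augmented-graph cut structure and the original-graph data; once that is in order, the core content is just the integrality of max-flow.
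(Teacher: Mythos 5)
The paper does not prove this statement itself; it is quoted directly from Schrijver's \emph{Combinatorial Optimization} (Corollary~11.2f), where it is established by essentially the same route you take here — reducing, via a shift by $\ell$ and an auxiliary source–sink digraph, to the integrality of maximum $s$--$t$ flow. Your proof is correct: the necessity direction is the standard telescoping argument, the handling of infinite bounds by a uniform large $M$ is sound (finitely many subsets, and only $u$ on $\delta^+(U)$ and $\ell$ on $\delta^-(U)$ enter the right-hand side, so finite cut inequalities are untouched), the substitution $y = z + \ell$ correctly transforms both the balance equations and the cut inequalities, and the min-cut computation collapses, as you claim, to $b'(S) \le c(\delta^+(S))$, which translates back exactly to $b(S)\le u(\delta^+(S))-\ell(\delta^-(S))$ for $\emptyset \ne S\subsetneq V$.
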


We are now ready to prove \Cref{main-0}.

\begin{proof}[Proof of \Cref{main-0}]
If $P=\emptyset$, then there is nothing to prove. Otherwise, $P\neq \emptyset$. Before we prove (a) and (b), let us set the scene. To this end, let $c\in\mathbb{Z}^A$ be a cost vector such that $\max\{c^\top y: y\in P\}$ admits an optimal solution, let $\omega^\star$ be the optimal value, and let $F$ be the optimal face. For $i=1,2$, let $\mathcal{D}_i$ be a subfamily of $\mathcal{C}_i$ such that $F=P\cap \{y: y(\delta^+(U))-y(\delta^-(U))= f_i(U)~\forall U\in \mathcal{D}_i,i=1,2\}$. Define the polyhedron $\widetilde{P}:=\{x\in \mathbb{R}^V:\1^\top x=0,x(U)\leq f_i(U)~\forall U\in \mathcal{C}_i,i=1,2\}$, and the face $\widetilde{F}:=\widetilde{P}\cap \{x: x(U)= f_i(U)~\forall U\in \mathcal{D}_i,i=1,2\}$.

\begin{claim} 
Let $x\in \mathbb{R}^V,y\in \mathbb{R}^A$ such that $y$ is an $x$-transshipment. Then $y\in P$ if and only if $x\in \widetilde{P}$; also $y\in F$ if and only if $x\in \widetilde{F}$.
\end{claim}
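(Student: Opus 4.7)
The plan is to reduce both equivalences to a single bookkeeping identity: if $y$ is an $x$-transshipment, then for every $U \subseteq V$,
\[
y(\delta^+(U)) - y(\delta^-(U)) = x(U).
\]
Once this identity is in hand, the claim is immediate: the defining inequalities of $P$ translate termwise into the defining inequalities of $\widetilde{P}$ (and likewise for the tight constraints cutting out $F$ and $\widetilde{F}$).

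To prove the identity, I would fix $U \subseteq V$ and sum the transshipment equations $y(\delta^+(v)) - y(\delta^-(v)) = x_v$ over $v \in U$, getting $x(U)$ on the right. On the left, a double-counting argument handles each arc $a$: if both endpoints of $a$ lie in $U$, then $y_a$ appears once with sign $+$ (from the tail) and once with sign $-$ (from the head), so it cancels; if $a \in \delta^+(U)$, only the tail lies in $U$ and $y_a$ contributes $+y_a$; if $a \in \delta^-(U)$, only the head lies in $U$ and $y_a$ contributes $-y_a$; if both endpoints lie outside $U$, $a$ does not appear. Summing gives $y(\delta^+(U)) - y(\delta^-(U))$, yielding the identity.

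Specializing to $U = V$ (where $\delta^+(V) = \delta^-(V) = \emptyset$) shows $\mathbf{1}^\top x = x(V) = 0$ holds automatically for any transshipment, so this constraint in the definition of $\widetilde{P}$ is free. For every $U \in \mathcal{C}_i$, the identity rewrites the inequality $y(\delta^+(U)) - y(\delta^-(U)) \leq f_i(U)$ as $x(U) \leq f_i(U)$, so $y \in P$ iff $x \in \widetilde{P}$. The same identity rewrites the equality $y(\delta^+(U)) - y(\delta^-(U)) = f_i(U)$ for $U \in \mathcal{D}_i$ as $x(U) = f_i(U)$, proving $y \in F$ iff $x \in \widetilde{F}$.

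There is no real obstacle here; the argument is a routine translation, with the only minor care needed in confirming that $\mathbf{1}^\top x = 0$ is indeed automatic (which is why the definition of $\widetilde{P}$ includes it as a standalone equation even though $P$ has no analogous explicit constraint). The payoff is that this lemma lets us transfer the problem on $P$ and $F$ to the base-polyhedron setting $\widetilde{P}$, $\widetilde{F}$, where \Cref{box-TDI} applies, and recover integral points in $F$ via \Cref{b-transshipment}.
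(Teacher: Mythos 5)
Your proof is correct and takes essentially the same approach as the paper: both reduce the claim to the identity $y(\delta^+(U)) - y(\delta^-(U)) = x(U)$ for every $U \subseteq V$ together with the observation that $\1^\top x = 0$ holds automatically for any transshipment. The paper simply states these facts without elaboration, whereas you supply the routine double-counting verification of the identity.
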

\begin{cproof}
Since an $x$-transshipment exists, $\1^\top x=0$ is automatically satisfied. The claim now follows from the equality $y(\delta^+(U))-y(\delta^-(U))=x(U)$ for all $U\subseteq V$.
\end{cproof}

Pick an arbitrary point $\bar{y}\in F$, and define $\bar x\in \mathbb{R}^V$ by $\bar x_v:=\bar y(\delta^+(v))-\bar y(\delta^-(v))~\forall v\in V$.
By Claim~1, $\bar x\in \widetilde{F}$, so $\widetilde{F}\neq \emptyset$. First we prove the second part as its proof is shorter and contains the crux of the argument. 

{\bf (b)} It suffices to find an integral point $y^\star\in F$ satisfying $\ell\leq y^\star\leq u$. By \Cref{box-TDI}, $\widetilde{P}$ is an integral polyhedron, hence $\widetilde{F}$ contains an integral point $b$.
Observe that $b(U)\leq f_i(U)$ for all $U\in \mathcal{C}_i,i=1,2$, so $b(U)\leq \min_{i=1,2} f_i(U)$ for all $U\subsetneq V,U\neq \emptyset$ (recall the convention that $f_i(U)=+\infty$ if $U\notin \mathcal{C}_i$). Thus, by the cut condition \eqref{eq:cut-condition}, we have that $b(U)\leq u(\delta^+(U))-\ell(\delta^-(U))$ for all $U\subsetneq V, U\neq \emptyset$. Thus, by \Cref{b-transshipment}, there exists a $b$-transshipment $y^\star\in \mathbb{Z}^A$ such that $\ell\leq y^\star\leq u$. Since $b\in \widetilde{F}$, it follows from Claim~1 that $y^\star\in F$. This is the desired point.

{\bf (a)}
To prove this part it suffices to show that the dual of $\max\{c^\top y: y\in P\}$ has an integral optimal solution. Let $M\in \{0,\pm 1\}^{V\times A}$ denote the node-arc incidence matrix of $D$. It is well-known that $M$ is a totally unimodular matrix (see Theorem 13.9 of \cite{Schrijver03}). Observe that $\bar{x}=M\bar{y}$.

\begin{claim}
There exists $w\in \mathbb{Z}^V$ such that $w^\top M=c^\top$ and $w^\top \bar{x} = \omega^\star$.
\end{claim}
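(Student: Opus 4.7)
The plan is to use LP duality to produce a rational $w\in\mathbb{R}^V$ with $w^\top M=c^\top$, and then upgrade it to an integer vector via the total unimodularity of $M$. The second requirement of the claim will then come for free: once $w^\top M=c^\top$, we automatically get $w^\top\bar{x}=w^\top M\bar{y}=c^\top\bar{y}=\omega^\star$, since $\bar{y}\in F$ attains the primal optimum. Hence the entire content of the claim reduces to exhibiting an integer $w$ with $M^\top w=c$.

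To get a rational solution, I would apply strong LP duality to the (finite) primal program $\max\{c^\top y:y\in P\}$, whose optimum $\omega^\star$ is finite by hypothesis. This yields nonnegative dual multipliers $z^\star=(z^\star_{i,U})_{U\in\mathcal{C}_i,\,i=1,2}$ satisfying $\sum_{i,U}z^\star_{i,U}(\chi^{\delta^+(U)}-\chi^{\delta^-(U)})=c$. Defining $w':=\sum_{i,U}z^\star_{i,U}\chi^U\in\mathbb{R}^V$ and using the identity $M^\top\chi^U=\chi^{\delta^+(U)}-\chi^{\delta^-(U)}$, which is immediate from the definition of the node-arc incidence matrix, we obtain $M^\top w'=c$. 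Thus $M^\top w=c$ admits a (possibly fractional) solution.

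To upgrade $w'$ to an integer vector, I would read $M^\top w=c$ as the potential system ``$w_u-w_v=c_a$ for every arc $a=uv$'', fix a spanning forest of the underlying undirected graph of $D$, assign value $0$ to a chosen root of each connected component, and propagate the integer tensions $c_a$ along tree arcs to define $w\in\mathbb{Z}^V$. This $w$ is integer because $c$ is, and satisfies $M^\top w=c$ on all tree arcs by construction; for every non-tree arc the required equation holds because the signed sum of $c_a$'s around the corresponding fundamental undirected cycle must vanish, a consistency condition forced by the existence of the rational solution $w'$. Equivalently, the total unimodularity of $M$ (already invoked just before the claim) directly implies that an integer-right-hand-side system $M^\top w=c$ with a rational solution admits an integer solution. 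This rational-to-integer step is the only mildly subtle point in the proof; everything else is LP duality and bookkeeping.
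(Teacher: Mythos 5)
Your proof is correct, and it takes a genuinely different route from the paper. The paper argues by linear implication: since any $y$ with $My=\bar{x}$ lands in $F$ (by Claim~1) and hence satisfies $c^\top y=\omega^\star$, the affine system $My=\bar{x}$ implies the equation $c^\top y=\omega^\star$, and by the affine Farkas lemma there must be a real $w$ with $w^\top M=c^\top$ and $w^\top\bar{x}=\omega^\star$. You instead apply strong LP duality to $\max\{c^\top y:y\in P\}$ and use the factorization $M^\top\chi^U=\chi^{\delta^+(U)}-\chi^{\delta^-(U)}$ to push the dual certificate through $M^\top$, which directly exhibits a real $w'$ with $M^\top w'=c$. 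Both routes then integerize via total unimodularity of $M$, and you correctly observe (as does the paper) that $w^\top\bar{x}=w^\top M\bar{y}=c^\top\bar{y}=\omega^\star$ is then automatic. Your version is self-contained in that it does not depend on Claim~1, but it anticipates the dual structure that the paper defers until Claim~4; the paper's version is arguably leaner at this spot, since it only needs the linear-algebraic fact that a nonempty affine subspace implies an affine equation precisely when that equation is an affine combination of the defining equations. Either way, the essential content of the claim is, as you note, that $c$ lies in the column space of $M^\top$, after which TU delivers integrality and the second condition is free.
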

\begin{cproof}
Observe that if $y\in \mathbb{R}^A$ satisfies $My=\bar{x}$, then since $\bar{x}\in \widetilde{F}$, we have $y\in F$ by Claim~1, so $c^\top y=\omega^\star$ by definition. Thus, the linear system $My=\bar{x}$ where $y\in \mathbb{R}^A$ is a vector of variables, implies the equation $c^\top y=\omega^\star$. Subsequently, there exists $w\in \mathbb{R}^V$ such that $w^\top M=c^\top$ and $w^\top \bar{x}=\omega^\star$. Since $M$ is totally unimodular, and $c\in \mathbb{Z}^A$, we may choose $w\in \mathbb{Z}^V$ such that $w^\top M= c^\top$. Note that $w^\top \bar{x} = w^\top M \bar{y} = c^\top \bar{y} = \omega^\star$.
\end{cproof}

\begin{claim} 
$\max\{w^\top x:x\in \widetilde{P}\}=\omega^\star$.
\end{claim}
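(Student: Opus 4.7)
The plan is to prove the two inequalities separately. One direction is essentially free: by Claim~1, $\bar x\in\widetilde F\subseteq \widetilde P$, and Claim~2 gives $w^\top \bar x=\omega^\star$, so $\bar x$ is a feasible point of $\max\{w^\top x:x\in\widetilde P\}$ achieving value $\omega^\star$, hence the maximum is at least $\omega^\star$.

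For the reverse inequality I would take an arbitrary $x\in\widetilde P$ and try to exhibit a (possibly fractional) $x$-transshipment $y\in\mathbb{R}^A$, that is, $My=x$. Once such $y$ is produced, Claim~1 upgrades it to $y\in P$, and then
\[
w^\top x = w^\top M y = c^\top y \leq \omega^\star,
\]
using $w^\top M=c^\top$ from Claim~2 and the definition of $\omega^\star$. So everything reduces to showing that every $x\in\widetilde P$ lies in the column space of $M$.

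Now, the column space of the incidence matrix $M$ consists exactly of those $x\in\mathbb{R}^V$ for which $x(S)=0$ on every weakly connected component $S$ of $D$ (this is standard, and implies $\1^\top x=0$). If $D$ is weakly connected the requirement collapses to $\1^\top x=0$, which is part of the definition of $\widetilde P$, and we are done. Otherwise, each proper weakly connected component $S\subsetneq V$ satisfies $\delta^+(S)=\delta^-(S)=\emptyset$, so the standing hypothesis of the theorem yields $\min_{i=1,2}f_i(S)\leq 0$; thus $S\in\mathcal C_{i_0}$ for some $i_0$ with $f_{i_0}(S)\leq 0$, giving $x(S)\leq f_{i_0}(S)\leq 0$. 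Applying the same argument to $V\setminus S$ (also a union of components with empty boundary) and combining with $\1^\top x=0$ forces $x(S)=0$, as required.

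The main obstacle is therefore the disconnected case of the transshipment existence step---this is precisely where the theorem's sign hypothesis on $\min_{i=1,2} f_i(U)$ at subsets with $\delta^+(U)=\delta^-(U)=\emptyset$ earns its keep. Everything else is routine linear algebra packaged with Claims~1 and~2, so once this obstacle is cleared the identity $\max\{w^\top x:x\in\widetilde P\}=\omega^\star$ follows immediately.
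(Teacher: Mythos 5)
Your proof is correct and mirrors the paper's argument in structure: the $(\geq)$ direction is identical, and the $(\leq)$ direction reduces to producing an $x$-transshipment $y$ with $My=x$, using the hypothesis $\min_{i}f_i(U)\leq 0$ at sets $U$ with $\delta^+(U)=\delta^-(U)=\emptyset$. The only difference is the tool used to conclude that $My=x$ is solvable: the paper invokes the $b$-transshipment theorem (Corollary 11.2f of Schrijver) with $\ell=-\infty$, $u=+\infty$, which collapses to exactly the condition $x(U)\leq 0$ on sets with empty boundary; you instead argue directly from the standard linear-algebraic characterization of the column space of a node-arc incidence matrix (namely $x(S)=0$ on each weakly connected component $S$), deducing the needed equalities from $x(S)\leq 0$, $x(V\setminus S)\leq 0$, and $\1^\top x=0$. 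Your route is slightly more elementary and self-contained; the paper's is shorter given that Theorem~\ref{b-transshipment} is already needed elsewhere in the proof. Both are valid and interchangeable here.
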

\begin{cproof}
$(\geq)$ follows from $w^\top \bar{x}=\omega^\star$.
$(\leq)$
Let $x'\in \widetilde{P}$. We know that $x'(U)\leq f_i(U)$ for all $U\in \mathcal{C}_i,i=1,2$, so $x'(U)\leq \min_{i=1,2}f_i(U)$ for all $U\subsetneq V,U\neq \emptyset$. By hypothesis, the right-hand side is at most $0$ for all $U\subsetneq V,U\neq \emptyset$ such that $\delta^+(U)=\delta^-(U)=\emptyset$, so for all such $U$, $x'(U)\leq 0$. Thus, by \Cref{b-transshipment} with the choices of $u=+\infty$ and $\ell=-\infty$, there exists an $x'$-transshipment $y'\in \mathbb{R}^A$, i.e.\ $My'=x'$. Observe that $y'\in P$ by Claim~1. Thus, $
w^\top x'=w^\top M y'=c^\top y'\leq
\omega^\star$, where the last inequality follows from the definition of $\omega^\star$ and the fact that $y'\in P$.
\end{cproof}

Now consider the dual of
 $\max\{w^\top x:x\in \widetilde{P}\}$:
\begin{equation}\label{eq:submodular-dual}
\begin{array}{crrl}
\min &
\sum_{i=1,2}\sum_{U\in\mathcal{C}_i} f_i(U) z^i_U
&&\\ \\
\text{s.t.} &
\sum_{i=1,2}\sum_{U\in\mathcal{C}_i} \chi^U z^i_U+ \1 \mu
&=& w \\ \\
&z^i_U&\geq &0 \quad  U\in \mathcal{C}_i,\, i=1,2,
\end{array}
\end{equation}
where $\mu\in \mathbb{R}$ is the dual variable corresponding to $\1^\top x=0$, and $\chi^U$ is the incidence vector of $U$ as a subset of $V$. By \Cref{box-TDI}, the system of constraints of $\widetilde{P}$ is TDI. Thus, since $w$ is integral, it follows that \eqref{eq:submodular-dual} has an integral optimal solution $(\bar z,\bar \mu)$. By Claim~3 and LP Strong Duality, \eqref{eq:submodular-dual} has optimal value $\omega^\star$. Thus, $\sum_{i=1,2}\sum_{U\in\mathcal{C}_i} f_i(U) \bar{z}^i_U=\omega^\star$.

\begin{claim} 
$\bar z=(\bar z_U^i)_{U\in \mathcal{C}_i,\, i=1,2}$ is an optimal solution to the dual of $\max\{c^\top y: y\in P\}$:
\begin{equation}\label{eq:submodular-flow-intersection-dual}
\begin{array}{crrl}
\min &
\sum_{i=1,2}\sum_{U\in\mathcal{C}_i} f_i(U) z^i_U
&&\\ \\
\text{s.t.} &
\sum_{i=1,2}\sum_{U\in\mathcal{C}_i} \left(\chi^{\delta^+(U)}-\chi^{\delta^-(U)}\right)z^i_U
&=& c \\ \\
&z^i_U&\geq &0 \quad  U\in \mathcal{C}_i,\, i=1,2,
\end{array}
\end{equation} where $\chi^{\delta^+(U)},\chi^{\delta^-(U)}$ are the incidence vectors of $\delta^+(U),\delta^-(U)$ as subsets of $A$.
\end{claim}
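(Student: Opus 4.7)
The plan is to derive both the feasibility and the optimality of $\bar z$ for \eqref{eq:submodular-flow-intersection-dual} from the known feasibility and optimality of $\bar z$ for \eqref{eq:submodular-dual}, by multiplying through on the right by the node-arc incidence matrix $M$. This is the natural last step of the argument, and it is exactly what the setup in Claims 1--3 was engineered to enable.

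The key computation I would set up is the identity $(\chi^U)^\top M = (\chi^{\delta^+(U)}-\chi^{\delta^-(U)})^\top$ for every $U\subseteq V$, which is immediate from the fact that an arc $a=uv$ contributes $(\chi^U)_u-(\chi^U)_v$ to the $a$-th entry, this being $+1$ if $a\in\delta^+(U)$, $-1$ if $a\in\delta^-(U)$, and $0$ otherwise. The same accounting yields $\1^\top M=\0^\top$. Taking the transpose of the feasibility equation of \eqref{eq:submodular-dual} and right-multiplying by $M$, these two identities, together with $w^\top M=c^\top$ from Claim~2, collapse the expression to exactly the feasibility equation of \eqref{eq:submodular-flow-intersection-dual}; the nonnegativity of $\bar z$ is inherited. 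So $\bar z$ is feasible for the dual.

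For optimality I would invoke LP weak duality: any feasible $z$ of \eqref{eq:submodular-flow-intersection-dual} satisfies $\sum_{i,U} f_i(U) z^i_U \geq c^\top y$ for every $y\in P$, so its objective value is at least $\omega^\star$. On the other hand, Claim~3 together with LP strong duality applied to $\max\{w^\top x : x\in\widetilde P\}$ (whose constraint system is TDI by \Cref{box-TDI}, so the maximum equals the minimum of the dual achieved by $(\bar z,\bar\mu)$) gives $\sum_{i,U} f_i(U)\bar z^i_U=\omega^\star$. Hence $\bar z$ attains the minimum in \eqref{eq:submodular-flow-intersection-dual}.

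I do not foresee any real obstacle: the content of Claim~4 is essentially a change of dual variables between the ``vertex space'' polyhedron $\widetilde P$ and the ``arc space'' polyhedron $P$, mediated by the totally unimodular matrix $M$, and the identities above are precisely what allow the same multipliers $\bar z$ to serve both systems. Once the claim is in hand, the integrality of $\bar z$, guaranteed by the TDI-ness of the system defining $\widetilde P$ via \Cref{box-TDI}, immediately produces an integral optimal dual solution to $\max\{c^\top y : y\in P\}$, completing part (a) of \Cref{main-0}.
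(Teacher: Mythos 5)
Your proposal is correct and follows essentially the same route as the paper. Both proofs obtain feasibility of $\bar z$ for \eqref{eq:submodular-flow-intersection-dual} by pushing the equality constraint of \eqref{eq:submodular-dual} through the node-arc incidence matrix $M$, using the identities $M^\top\chi^U=\chi^{\delta^+(U)}-\chi^{\delta^-(U)}$ and $M^\top\1=\0$ together with $w^\top M=c^\top$ (you transpose and multiply on the right; the paper multiplies on the left by $M^\top$, which is the same computation), and both obtain optimality by combining Claim~3 with LP duality to conclude that $\bar z$ is feasible with objective value $\omega^\star$.
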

\begin{cproof}
By definition, $\omega^\star=\max\{c^\top y: y\in P\}$, so by LP Strong Duality, it suffices to prove that $\bar{z}$ is a feasible solution to \eqref{eq:submodular-flow-intersection-dual} with objective value $\omega^\star$. The latter is indeed the case as we argued above. Let us prove feasibility. Clearly, $\bar{z}\geq \0$. Moreover, $$\sum_{i=1,2}\sum_{U\in\mathcal{C}_i} \left(\chi^{\delta^+(U)}-\chi^{\delta^-(U)}\right)\bar z^i_U =M^\top \left(\sum_{i=1,2}\sum_{U\in\mathcal{C}_i} \chi^U \bar z^i_U+ \1 \bar \mu\right)= M^\top w  = c$$
where the first equality follows from $M^\top \chi^U=\chi^{\delta^+(U)}-\chi^{\delta^-(U)}$ for every $U\subsetneq V$, $U\neq \emptyset$, and $M^\top \1=0$, the second equality from the feasibility of $(\bar{z},\bar{\mu})$ for \eqref{eq:submodular-dual}, and the third equality from the definition of $w$. Thus, $\bar{z}$ is feasible for \eqref{eq:submodular-flow-intersection-dual}, as required.
\end{cproof}

Claim~4 finishes the proof of the first part.
\end{proof}

\paragraph{An extension.} By a delicate analysis of the proof, we can show that \Cref{main-0} admits the following extension. Below, for a vector $x\in \mathbb{R}^n$, $x^+$ and $x^-$ denote the vectors in $\mathbb{R}^n$ defined as $x^+_i=\max\{x_i,0\}$ and $x^-_i=\min\{x_i,0\}$ for all $i\in [n]$.

\begin{theorem}\label{general-main-0}
Let $V$ be a finite set, and let $M$ be a $|V|$-by-$m$ totally unimodular matrix with rows indexed by the elements of $V$, such that $M^\top \1=\0$. For $i=1,2$, let $\mathcal{C}_i$ be a crossing family over ground set $V$, and let $f_i:\mathcal{C}_i\to \mathbb{Z}$ be a crossing submodular function, where $\min_{i=1,2}f_i(U)\leq 0$ for all $U\subsetneq V,U\neq \emptyset$ such that $M^\top \chi^U=\0$. Let
\begin{equation}\label{eq:general-sub_flow_intersection}
P:=\left\{y\in \mathbb{R}^m : (\chi^U)^\top M y\leq f_i(U)~ \forall U\in \mathcal{C}_i,\, i=1,2\right\}.
\end{equation} Then the following statements hold: \begin{enumerate}[a.]
\item The system \eqref{eq:general-sub_flow_intersection} defining $P$ is TDI. In particular, $P$ is an integral polyhedron.
\item For every $\ell\in (\mathbb{Z}\cup\{-\infty\})^m$, $u\in (\mathbb{Z}\cup\{+\infty\})^m$ satisfying $\ell\leq u$ and the following condition:
\begin{equation}\label{eq:general-cut-condition}
\min_{i=1,2} f_i(U)\leq u^\top (M^\top \chi^U)^+  -\ell^\top (M^\top \chi^U)^-\quad \forall U\subsetneq V,\, U\neq \emptyset,
\end{equation} we have that every non-empty face of $P$ contains $y^\star\in \mathbb{Z}^A$ satisfying $\ell\leq y^\star\leq u$.
\end{enumerate}
\end{theorem}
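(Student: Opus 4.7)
The plan is to mimic the proof of Theorem~\ref{main-0} almost verbatim, with the abstract totally unimodular matrix $M$ playing the role of the node-arc incidence matrix. An inspection of that proof reveals that only three properties of the incidence matrix are actually used: its total unimodularity (so that an integer $w\in\mathbb{Z}^V$ with $M^\top w=c$ can be extracted in the analog of Claim~2, and Theorem~\ref{box-TDI} is applicable), the identity $M^\top\1=\0$ (which makes $\1^\top x=0$ automatic in the lifted polyhedron $\widetilde P$), and the characterization of integer $b$-transshipments inside a box given by Theorem~\ref{b-transshipment}. The first two properties are direct hypotheses of Theorem~\ref{general-main-0}, so the substantive work is to replace the third.

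The required generalization is a Hoffman-style lemma: for a $|V|{\times}m$ totally unimodular matrix $M$ with $M^\top\1=\0$, an integer $b\in\mathbb{Z}^V$ with $\1^\top b=0$, and integer bounds $\ell\in(\mathbb{Z}\cup\{-\infty\})^m$, $u\in(\mathbb{Z}\cup\{+\infty\})^m$ with $\ell\leq u$, there exists $y\in\mathbb{Z}^m$ with $My=b$ and $\ell\leq y\leq u$ if and only if $b(U)\leq u^\top(M^\top\chi^U)^+-\ell^\top(M^\top\chi^U)^-$ holds for every nonempty proper $U\subsetneq V$. Necessity is a one-line computation, bounding $y^\top(M^\top\chi^U)$ coordinatewise by $u_j$ or $\ell_j$ according to the sign of $(M^\top\chi^U)_j$. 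For sufficiency, the total unimodularity of the stacked constraint matrix $[M^\top;\,-M^\top;\,I;\,-I]^\top$ reduces integer feasibility of $\{y:My=b,\,\ell\leq y\leq u\}$ to LP feasibility, and Farkas' lemma reduces LP feasibility to a universal inequality over $x\in\mathbb{R}^V$; the total unimodularity of $M$ combined with $M^\top\1=\0$ and $\1^\top b=0$ is then used to cut the universal quantifier down to $x=\chi^U$ for nonempty proper $U$.

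Once the lemma is in place, the proof of Theorem~\ref{general-main-0} follows the skeleton of Theorem~\ref{main-0} unchanged. For part~(b), pick an integer $b\in\widetilde F$ (which exists by the box-TDI-ness from Theorem~\ref{box-TDI}); then $b(U)\leq\min_{i=1,2}f_i(U)$ for every nonempty proper $U$, and condition~\eqref{eq:general-cut-condition} supplies the hypothesis of the lemma, producing $y^\star\in\mathbb{Z}^m$ with $My^\star=b$ and $\ell\leq y^\star\leq u$; the analog of Claim~1 places $y^\star$ in $F$. For part~(a), the dual manipulation carries over verbatim: an integer $w\in\mathbb{Z}^V$ with $M^\top w=c$ is obtained from the TU-ness of $M^\top$, and the lemma applied with $\ell=-\infty,\,u=+\infty$ lifts every $x'\in\widetilde P$ to some $y'\in P$, closing the LP duality argument and yielding an integer optimal dual solution to $\max\{c^\top y:y\in P\}$.

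The main obstacle will be the reduction step inside the sufficiency of the lemma. The naive approach of decomposing $x$ into a nonnegative combination of indicators of its level sets $x=\sum_t\lambda_t\chi^{U_t}$ and telescoping fails, because $(\cdot)^+$ and $(\cdot)^-$ are nonlinear, and the inequalities for the $\chi^{U_t}$ combine to something strictly weaker than the inequality sought for $x$. A more careful argument via the extreme rays of the Farkas dual cone should succeed: such rays are $\{0,\pm 1\}$-valued by the total unimodularity of the constraint matrix, and using the kernel vector $\1$ (available from $M^\top\1=\0$ and $\1^\top b=0$) one can fold the negative entries of a ray against its positive entries to reduce to $\{0,1\}$-valued $x$, i.e., to $x=\chi^U$. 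A cleaner alternative is to invoke a known generalization of Hoffman's circulation theorem to totally unimodular matrices with a constant left-kernel vector and deduce the lemma as a direct corollary.
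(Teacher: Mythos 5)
Your proposal is correct and takes essentially the same route as the paper's proof sketch: both hinge on the abstract $b$-transshipment feasibility lemma (the paper's Claim~2), established via Farkas' lemma together with total unimodularity of $M$ and the kernel-vector shift by $\alpha\1$ to reduce the Farkas alternatives to sets $\chi^U$, after which the skeleton of the proof of Theorem~\ref{main-0} carries over unchanged. The only cosmetic difference is that you package the uncapacitated and box-constrained feasibility statements into a single Hoffman-type lemma, whereas the paper states them separately as Claims~2(i) and 2(ii).
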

\begin{proof}[Proof sketch.]
The proof is almost identical to that of \Cref{main-0} but with a few modifications which we highlight as follows. First, the optimal face $F$ is now defined as $F=P\cap \{y: (\chi^U)^\top My= f_i(U)~\forall U\in \mathcal{D}_i,i=1,2\}$. Furthermore, $\widetilde{P}$ and its face $\widetilde{F}$ are defined as before, namely $\widetilde{P}:=\{x\in \mathbb{R}^V:\1^\top x=0,x(U)\leq f_i(U)~\forall U\in \mathcal{C}_i,i=1,2\}$, and the face $\widetilde{F}:=\widetilde{P}\cap \{x: x(U)= f_i(U)~\forall U\in \mathcal{D}_i,i=1,2\}$.

The statement of Claim~1 needs to be modified as follows:

\begin{claim} 
Let $x\in \mathbb{R}^V,y\in \mathbb{R}^m$ such that $x=My$. Then $y\in P$ if and only if $x\in \widetilde{P}$; also $y\in F$ if and only if $x\in \widetilde{F}$.
\end{claim}
\begin{cproof}
The claim follows from the definitions of $\widetilde{P}$ and $\widetilde{F}$, and the fact that $\1^\top x=\1^\top My=0$ since $M^\top \1=\0$. 
\end{cproof}

Furthermore, we need the following additional claim.

\begin{claim}
Let $b\in \mathbb{R}^V$ such that $\1^\top b=0$.
\begin{enumerate}[i.]
  \item The system $My=b$ is feasible if and only if $b(U)\le 0$ for all $U\subsetneq V,U\neq \emptyset$ such that $M^\top \chi^U=\0$.
  \item The system $My=b$, $\ell\le y\le u$ is feasible if and only if $b(U)\le u^\top (M^\top \chi^U)^+  -\ell^\top (M^\top \chi^U)^-$ for all $U\subsetneq V,U\neq \emptyset$.
\end{enumerate}
\end{claim}
\begin{cproof}
The ``only if'' direction of both statements is trivial, so we focus on the ``if'' statements. 

For part i), $My=b$ is feasible if $b^\top \bar{z}=0$ for all $\bar{z}\in \{z\in\mathbb{R}^V:M^\top z=\0\}$. By shifting $\bar{z}$ by some $\alpha\1,\alpha\in \mathbb{R}$, if necessary, we may assume that $\bar{z}\geq \0$. (Note that $M^\top z=M^\top (z+\alpha\1)$ and $b^\top z=b^\top (z+\alpha\1)=\0$, because $M^\top \1=\0$ and $b^\top \1=0$.) Furthermore, by scaling down $\bar{z}$, if necessary, we may assume that $\bar{z}\leq \1$. Thus, we can only focus on the points $\bar{z}$ in the polyhedron $Q:=\{z:M^\top z=\0,\, \0\le z\le \1\}$. In fact, by basic polyhedral theory, it suffices to focus on the extreme points $\bar{z}$ of $Q$. Since $M$ is totally unimodular, $\bar{z}$ must be integral, so $\bar{z}=\chi^U$ for some $U\subseteq V$. If $U\in \{\emptyset,V\}$, then clearly $b^\top \bar{z}=0$, so we are done. Otherwise, since $M^\top \chi^U=\0$, it follows from the assumption that $b(U)\leq 0$. Since $M^\top \1=\0$, we also have $M^\top \chi^{V\setminus U}=\0$, so again by assumption $b(V\setminus U)\leq 0$. Thus, $b(U),b(V\setminus U)\le 0$, and since $\1^\top b=0$, we get that $b^\top \bar{z} = b(U)=0$, as required.

For part ii), by Farkas' lemma, the system $My=b$, $\ell\le y\le u$ is feasible if, for all $(z,v,w)\in\mathbb{R}^V\times \mathbb{R}^m\times \mathbb{R}^m$ such that $v,w\ge \0$ and $M^\top z=v-w$, it follows that $b^\top z\le u^\top v-\ell^\top w$. Once again, by shifting $z$ by some $\alpha\1,\alpha\in \mathbb{R}$, if necessary, we may assume that $z\ge \0$. Furthermore, by scaling down $(z,v,w)$, if necessary, we may assume that $z\le \1$. As before, since $M$ is totally unimodular, we can focus on $z=\chi^U$ for some $U\subseteq V$. Since $u\ge \ell$, the choice of $v,w\geq \0$ such that $M^\top \chi^U=v-w$ that minimizes $u^\top v-\ell^\top w$, is $v=(M^\top \chi^U)^+$ and $w=(M^\top \chi^U)^-$. The statement follows.
\end{cproof}

Claim~2~(i) ensures that if $x\in \widetilde{P}$, then $My=x$ is feasible, because $x(U)\leq \min_{i=1,2}f_i(U)\leq 0$ for all $U\subsetneq V,U\neq \emptyset$ such that $M^\top \chi^U=\0$. Furthermore, Claim~(2)~(ii) ensures that if $\ell\le u$ satisfies the condition \eqref{eq:general-cut-condition}, then for all $x\in \widetilde{P}$, the system $My=x$, $\ell\le y\le u$ is feasible. With these observations, the rest of the proof of the theorem follows exactly the proof of \Cref{main-0}.
\end{proof}


\section{Applications of \Cref{main-0}, and proof of \Cref{main-1}}\label{sec:main-0-apps}

In this section we discuss three applications of \Cref{main-0}, one of which is \Cref{main-1}.

\paragraph{First application.} \Cref{main-0} implies the classic theorem of Edmonds and Giles~\cite{Edmonds77}. One way to prove it is to use a recent characterization of box-TDI systems. Consider a polyhedron $Q:=\{y:Ay\leq b\}$. For an integer $k\geq 1$, the $k\textsuperscript{th}$ dilation of $Q$ is $kQ:=\{y:Ay\leq k b\}$. $Q$ is \emph{principally box-integral} if $kQ$ is box-integral for all integers $k\geq 1$ such that $kQ$ is integral. This notion was coined recently by Chervet, Grappe, and Robert who proved that $Ax\leq b$ is box-TDI if, and only if, $Ax\leq b$ is TDI and $Q$ is principally box-integral~\cite{Chervet21}.

\begin{theorem}[\cite{Edmonds77}]
Let $D=(V,A)$ be a digraph, let $\mathcal{C}$ be a crossing family over ground set $V$, and let $f:\mathcal{C}\to \mathbb{Z}$ be a crossing submodular function. Then $y(\delta^+(U))-y(\delta^-(U))\leq f(U) ~ \forall U\in \mathcal{C}$ is box-TDI.
\end{theorem}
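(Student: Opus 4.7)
The plan is to combine \Cref{main-0} with the Chervet--Grappe--Robert characterization recalled above: to establish box-TDI of the system $\Sigma: y(\delta^+(U))-y(\delta^-(U))\leq f(U),\ U\in\mathcal{C}$, it suffices to show that $\Sigma$ is TDI and that its polyhedron $P$ is principally box-integral.

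For TDI, I would apply \Cref{main-0}(a) with $(f_1,\mathcal{C}_1):=(f,\mathcal{C})$ and $(f_2,\mathcal{C}_2):=(\0,\mathcal{C}^\star)$, where $\mathcal{C}^\star := \{U\subsetneq V,\, U\neq \emptyset: \delta^+(U)=\delta^-(U)=\emptyset\}$ is the crossing family of ``trivial'' sets (unions of connected components) and $f_2\equiv 0$ on $\mathcal{C}^\star$ is crossing submodular. The hypothesis $\min_i f_i(U)\leq 0$ on trivial sets is immediate, and the added $f_2$-constraints are the tautologies $0\leq 0$, so the combined polyhedron equals $P$. Each new dual variable $z^2_U$ has zero primal coefficient $\chi^{\delta^+(U)}-\chi^{\delta^-(U)}=\0$ and zero cost $f_2(U)=0$, hence can be set to $0$ in any integral dual optimum supplied by \Cref{main-0}(a); what remains is an integral dual optimum for $\Sigma$ alone, so $\Sigma$ is TDI.

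For principal box-integrality, it suffices to prove $P$ itself box-integral, since every dilation $kP$ is a submodular flow polyhedron of the same form with $kf$ in place of $f$. Fix integer $\ell\leq u$ and a non-empty face $G$ of $P\cap[\ell,u]$; let $\mathcal{D}\subseteq \mathcal{C}$ index the $f$-inequalities tight on $G$, and let $A_-,A_+\subseteq A$ be the arcs where $y_a=\ell_a$, respectively $y_a=u_a$, is tight on $G$. Pin the box by setting $\ell'_a=u'_a:=\ell_a$ for $a\in A_-$, $\ell'_a=u'_a:=u_a$ for $a\in A_+$, and $(\ell'_a,u'_a):=(\ell_a,u_a)$ otherwise, so that $G=\{y\in P:\, y(\delta^+(U))-y(\delta^-(U))=f(U)\ \forall U\in\mathcal{D}\}\cap[\ell',u']$. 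Apply \Cref{main-0}(b) with $(f_1,\mathcal{C}_1)=(f,\mathcal{C})$, $\mathcal{C}_2=2^V\setminus\{\emptyset,V\}$, and $f_2(U):=u'(\delta^+(U))-\ell'(\delta^-(U))$: the function $f_2$ is submodular on $2^V$ by an arc-by-arc check that uses $\ell'\leq u'$, $f_2(U)=0$ for trivial $U$ so the hypothesis holds, and the cut condition is automatic since $f_2$ equals its right-hand side. Letting $P'$ be the combined polyhedron and $F':=\{y\in P':\, y(\delta^+(U))-y(\delta^-(U))=f(U)\ \forall U\in\mathcal{D}\}$, every $y\in G$ lies in $P'$ (the box implies the $f_2$-constraints) with the required tight $f$-equalities, so $\emptyset\neq G\subseteq F'$. \Cref{main-0}(b) then furnishes an integer $y^\star\in F'$ with $\ell'\leq y^\star\leq u'$; unpacking the definitions shows $y^\star\in G$. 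Hence $P\cap[\ell,u]$ is integral, and box-TDI follows from Chervet--Grappe--Robert.

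The main hurdle I expect is the box-integrality step: \Cref{main-0}(b) guarantees integer points in faces of the combined polyhedron $P'$, which in general strictly contains $P\cap[\ell,u]$. The trick to align these is to absorb the box into two layers --- the pinned box $[\ell',u']$ for coordinates already fixed by $G$, and the second submodular function $f_2$ on $2^V\setminus\{\emptyset,V\}$ for the remaining cut inequalities --- so that the face $F'$ of $P'$ intersects $[\ell',u']$ exactly inside $G$.
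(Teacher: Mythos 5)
Your proof is correct, and at a high level it follows the same blueprint as the paper: prove TDI via Theorem~\ref{main-0}(a), prove that every dilation $kQ$ is box-integral via Theorem~\ref{main-0}(b), and combine with the Chervet--Grappe--Robert characterization. The TDI step is essentially identical to the paper's (same choice of $\mathcal{C}_2$ and $f_2\equiv 0$); your observation that each dual variable $z^2_U$ has zero primal coefficient and zero cost, hence can be dropped, is a slightly more careful justification of the paper's terser ``this system is precisely\dots'' sentence.

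Where you genuinely diverge is in the box-integrality step. The paper deletes the pinned arcs, working on the reduced digraph $D'=(V,A-(A_\ell\cup A_u))$, and compensates for the deletion by adding crossing modular correction terms $\pm\ell(\delta^{\pm}(U)\cap A_\ell)\pm u(\delta^{\pm}(U)\cap A_u)$ to $f_1$, then applies Theorem~\ref{main-0}(b) over $A'$ and extends the result back to $\mathbb{R}^A$. You instead keep the full digraph and full $f_1=f$, and absorb the tight box constraints by collapsing the bounds to $\ell'_a=u'_a$ on the fixed coordinates; the new bounds then propagate into $f_2(U)=u'(\delta^+(U))-\ell'(\delta^-(U))$ and make the cut condition automatic. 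Your route avoids both the arc-deletion bookkeeping and the modification of $f_1$, and the final check that the integer point $y^\star$ returned by Theorem~\ref{main-0}(b) lies in $G$ is immediate (the pinned bounds force the $A_-$ and $A_+$ equalities, $\ell\le\ell'\le y^\star\le u'\le u$, and the $f$-equalities for $\mathcal{D}$ hold by choice of $F'$). I checked the needed facts: $f_2$ is crossing submodular since $f_2(U)=\big(u'(\delta^+(U))-u'(\delta^-(U))\big)+(u'-\ell')(\delta^-(U))$ is modular plus submodular using $u'\ge\ell'$; $G\subseteq F'$ (so $F'\ne\emptyset$) because $G\subseteq[\ell',u']$ already forces the $f_2$-inequalities; and the hypothesis of Theorem~\ref{main-0} on trivial sets holds because $f_2(U)=0$ when $\delta^+(U)=\delta^-(U)=\emptyset$. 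So your variant is a clean, valid alternative to the paper's argument at this step; the two are logically equivalent, but yours is somewhat tidier.
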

\begin{proof} We need the following two claims.

\begin{claim} 
$y(\delta^+(U))-y(\delta^-(U))\leq f(U) ~ \forall U\in \mathcal{C}$ is TDI.
\end{claim}
\begin{cproof}
Let $\mathcal{C}_1:=\mathcal{C}$ and $f_1:=f$. Let $\mathcal{C}_2:=\{U\subsetneq V:U\neq \emptyset,\delta^+(U)=\delta^-(U)=\emptyset\}$ and $f_2(U):=0$ for all $U\in \mathcal{C}_2$. Then $\mathcal{C}_2$ is a crossing family and $f_2$ is a crossing submodular function defined over $\mathcal{C}_2$. Moreover, $\min_{i=1,2}f_i(U)\leq 0$ for all $U\subsetneq V,U\neq \emptyset$ such that $\delta^+(U)=\delta^-(U)=\emptyset$. Subsequently, it follows from \Cref{main-0}~(a) that the combined system $y(\delta^+(U))-y(\delta^-(U))\leq f_i(U) ~ \forall U\in \mathcal{C}_i,i=1,2$ is TDI. However, this system is precisely $y(\delta^+(U))-y(\delta^-(U))\leq f(U) ~ \forall U\in \mathcal{C}$, so the claim follows.
\end{cproof}

Let $Q:=\{y\in \mathbb{R}^A : y(\delta^+(U))-y(\delta^-(U))\leq f(U) ~ \forall U\in \mathcal{C}\}$.

\begin{claim} 
For every integer $k\geq 1$, $kQ$ is box-integral.
\end{claim}
\begin{cproof}
It suffices to prove this for $k=1$, given that $kf$ is a crossing submodular function for every integer $k\geq 1$. Take $\ell,u\in \mathbb{Z}^A$ such that $\ell\leq u$ and consider a nonempty face of $Q\cap [\ell,u]$, say $F=Q\cap \{y\in \mathbb{R}^A : y(\delta^+(U))-y(\delta^-(U))= f(U)~ \forall U\in \mathcal{D},\, y_e=\ell_e ~ \forall e\in A_\ell,\, y_e=u_e ~ \forall e\in A_u\}$, where $\mathcal{D}\subseteq \mathcal{C}$; $A_\ell,A_u\subseteq A$; $A_\ell\cap A_u=\emptyset$. We need to show that $F$ contains an integer point.
To this end, let $\mathcal{C}_1:=\mathcal{C}$ and for every $U\in \mathcal{C}_1$ define $$f_1(U):=f(U)+\ell(\delta^-(U)\cap A_\ell)-\ell(\delta^+(U)\cap A_\ell)+u(\delta^-(U)\cap A_u)-u(\delta^+(U)\cap A_u).$$
Let $A':=A- (A_\ell\cup A_u)$, $D':=(V,A')$,
$\mathcal{C}_2:=\{U\subsetneq V:U\neq \emptyset\}$, and $f_2(U):=u(\delta_{D'}^+(U))-\ell(\delta_{D'}^-(U))$ for all $U\in \mathcal{C}_2$.
Observe that $f_1$ is a crossing submodular function, because it is the sum of a crossing submodular function $f$, and crossing modular functions $\ell(\delta^-(U)\cap A_\ell)-\ell(\delta^+(U)\cap A_\ell)$ and $u(\delta^-(U)\cap A_u)-u(\delta^+(U)\cap A_u)$.
Observe also that $f_2(U)$, as the sum of a crossing modular function $u(\delta_{D'}^+(U))-u(\delta_{D'}^-(U))$ and a crossing submodular function $(u-\ell)(\delta_{D'}^-(U))$, is a crossing submodular function.

Consider the polyhedron $Q':=\{y\in \mathbb{R}^{A'} : y(\delta_{D'}^+(U))-y(\delta_{D'}^-(U))\leq f_i(U) ~ \forall U\in \mathcal{C}_i,\, i=1,2\}$ and its face $F':=Q'\cap \{y\in \mathbb{R}^{A'} : y(\delta_{D'}^+(U))-y(\delta_{D'}^-(U))= f_1(U) ~ \forall U\in \mathcal{D}\}$. Note that $F'$ is nonempty since it is the restriction of $F$ to $\mathbb{R}^{A'}$. We shall apply \Cref{main-0}~(b) to $Q'$.
Observe that $\min_{i=1,2}f_i(U)\leq f_2(U)=u(\delta_{D'}^+(U))-\ell(\delta_{D'}^-(U))$ for all $U\in \mathcal{C}_2=\{U\subsetneq V:U\neq \emptyset\}$, so the cut condition \eqref{eq:cut-condition} is satisfied for $\ell,u$. In particular, $\min_{i=1,2}f_i(U)\leq 0$ for all $U\subsetneq V,U\neq \emptyset$ such that $\delta_{D'}^+(U)=\delta_{D'}^-(U)=\emptyset$. Hence, by \Cref{main-0}~(b), $F'$ contains an integer point $y^\star\in\mathbb{R}^{A'}$ such that $\ell_e\le y^\star_e\le u_e$ for all $e\in A'$. Extend the point $y^\star$ to $\mathbb{R}^A$ by defining $y^\star_e:=\ell_e$ for all $e\in A_\ell$, and $y^\star_e:=u_e$ for all $e\in A_u$. Then, by the definition of $f_1$, we have $y^\star\in F\cap \mathbb{Z}^A$, as desired.
\end{cproof}

It follows from Claim~2 that $Q$ is principally box-integral. This, together with Claim~1 and the theorem of Chervet, Grappe, and Robert~\cite{Chervet21}, implies that $y(\delta^+(U))-y(\delta^-(U))\leq f(U) ~ \forall U\in \mathcal{C}$ is box-TDI.
\end{proof}

\paragraph{Second application.}
A digraph is \emph{weakly connected} if its underlying undirected graph is connected. The next application is the following result, which surprisingly seems to be new. Observe that the weak connectivity assumption cannot be dropped, as shown by \Cref{bad-example}.

\begin{theorem}\label{thm:integral_intersection_weakly_connected}
Let $D=(V,A)$ be a weakly connected digraph and, for $i=1,2$, let $\mathcal{C}_i$ be a crossing family over ground set $V$ and $f_i:\mathcal{C}_i\to \mathbb{Z}$ be a crossing submodular function. Then the system in \eqref{eq:sub_flow_intersection} is TDI, and in particular, the polyhedron $P$ is integral.
\end{theorem}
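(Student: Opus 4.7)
The plan is to deduce this directly from Theorem \ref{main-0}(a). The only hypothesis of Theorem \ref{main-0} beyond the crossing submodular structure is the condition
\[
\min_{i=1,2} f_i(U) \leq 0 \quad \text{for all } U\subsetneq V,\, U\neq\emptyset \text{ with } \delta^+(U)=\delta^-(U)=\emptyset.
\]
So the entire proof reduces to observing that weak connectivity makes this hypothesis vacuous, and then quoting Theorem \ref{main-0}(a).

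The key observation is the following: if $D=(V,A)$ is weakly connected, then for every $U\subsetneq V$ with $U\neq\emptyset$, there is at least one edge of the underlying undirected graph with one endpoint in $U$ and the other in $V\setminus U$. Orienting that edge either way shows that at least one of $\delta^+(U)$ and $\delta^-(U)$ is non-empty, i.e.\ $\delta^+(U)\cup\delta^-(U)\neq\emptyset$. Consequently, there is no proper nonempty subset $U\subsetneq V$ with $\delta^+(U)=\delta^-(U)=\emptyset$, and the hypothesis of Theorem \ref{main-0} is vacuously satisfied.

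Therefore Theorem \ref{main-0}(a) applies to $(\mathcal{C}_1,f_1)$ and $(\mathcal{C}_2,f_2)$, yielding that the system \eqref{eq:sub_flow_intersection} is TDI. Since the right-hand sides $f_i(U)$ are integer-valued, the standard consequence of TDI-ness (Hoffman--Edmonds) gives that $P$ is an integral polyhedron. There is no real obstacle here; the entire content lies in noticing that the single structural hypothesis of Theorem \ref{main-0} is equivalent to the absence of isolated cuts, which is exactly what weak connectivity provides.
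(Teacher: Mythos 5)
Your proof is correct and takes essentially the same route as the paper: observe that weak connectivity makes the hypothesis $\min_{i=1,2}f_i(U)\leq 0$ for all $U$ with $\delta^+(U)=\delta^-(U)=\emptyset$ vacuous, then invoke Theorem~\ref{main-0}(a). You have merely spelled out the (immediate) equivalence between weak connectivity and the absence of such $U$, which the paper leaves implicit.
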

\begin{proof}
Since $D$ is weakly connected, the condition $\min_{i=1,2}f_i(U)\leq 0$ for all $U\subsetneq V,U\neq \emptyset$ is vacuously true, because there is no such $U$. Thus, the result follows from \Cref{main-0}~(a).
\end{proof}

\paragraph{Third application.} The final application is \Cref{main-1}, which we restate for convenience.

\mainone*
\begin{proof}
Let $\mathcal{C}_1:=\mathcal{C}$, $f_1:=f$, and define $\mathcal{C}_2:=\{U\subsetneq V: U\neq \emptyset\}$,  $f_2(U):=d_A^+(U)-k$ for all $U\in\mathcal{C}_2$. Observe that $f_2$ is a crossing submodular function. Consider the vector $y\in \mathbb{R}^A$ that assigns $\frac{k}{\tau}$ to every arc $a\in A$.
Then $y(\delta^+(U))-y(\delta^-(U))\leq f_1(U)$ for all $U\in \mathcal{C}_1$, by one of our assumptions. Moreover, for all $U\subsetneq V,\, U\neq \emptyset$, our assumption implies that
$d_A^+(V\setminus U)+(\frac{\tau}{k}-1)d_A^-(V\setminus U)\geq \tau$, which in turn can be written as $y(\delta^+(V\setminus U))-y(\delta^-(V\setminus U))\geq k- d_A^-(V\setminus U)$, which is equivalent to $y(\delta^+(U))-y(\delta^-(U))\leq f_2(U)$. Furthermore, $f_2(U)\leq d_A^+(U)$ for all $U\subsetneq V$, $U\neq \emptyset$, so the cut condition \eqref{eq:cut-condition} holds for the choices of $\ell=\0,u=\1$. (Observe further that $\min_{i=1,2}f_i(U)\leq 0$ for all $U\subsetneq V$, $U\neq \emptyset$ such that $\delta^+(U)=\delta^-(U)=\emptyset$, holds vacuously as there is no such $U$.) It therefore follows from \Cref{main-0}~(b) that there exists $y^\star\in \{0,1\}^A$ such that $y^\star(\delta^+(U))-y^\star(\delta^-(U))\leq f_i(U)$ for all $U\in \mathcal{C}_i$, $i=1,2$. Let $J:=\{a\in A:y^\star_a=1\}$. Then
$d_J^+(U)-d_J^-(U)
=y^\star(\delta^+(U))-y^\star(\delta^-(U))
\leq f(U)$ for all $U\in \mathcal{C}$. Moreover, $
d_J^+(U)-d_J^-(U)
\leq f_2(U)= d_A^+(U)-k$ for all $U\subsetneq V,U\neq \emptyset$,
implying in turn that $J$ is a $k$-arc-connected flip. Thus, $J$ is the desired set.
\end{proof}

\section{Applications of \Cref{main-1}}\label{sec:main-1-apps}

We present several applications of \Cref{main-1} to Graph Orientations and Combinatorial Optimization.

\subsection{An extension of the weak orientation theorem}

For $\tau=2k$, \Cref{main-1} gives the following strengthening of the weak orientation theorem (that every digraph whose underlying undirected graph is $2k$-edge-connected, has a $k$-arc-connected flip).

\begin{theorem}\label{NW}
Let $D=(V,A)$ be a digraph whose underlying undirected graph is $2k$-edge-connected. Let $\mathcal{C}$ be a crossing family over ground set $V$, and let $f:\mathcal{C}\to \mathbb{Z}$ be a crossing submodular function such that $f(U)\geq \frac{1}{2}(d_A^+(U)-d_A^-(U))$ for all $U\in \mathcal{C}$. Then $D$ has a $k$-arc-connected flip $J$ such that $f(U)\geq d_J^+(U)-d_J^-(U)$ for all $U\in \mathcal{C}$.\qed
\end{theorem}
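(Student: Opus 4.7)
The plan is to derive this as an immediate corollary of \Cref{main-1} by specializing the parameter $\tau$ to $2k$. The two hypotheses in \Cref{main-1} are (i) a cut inequality $d_A^+(U)+(\tfrac{\tau}{k}-1)d_A^-(U)\geq \tau$ for all nonempty proper $U\subsetneq V$, and (ii) the condition $f(U)\geq \tfrac{k}{\tau}(d_A^+(U)-d_A^-(U))$ on the crossing submodular function. So the first step is simply to substitute $\tau=2k$ into each of these and check that they coincide with the hypotheses of \Cref{NW}.

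For (ii) the substitution is immediate: $\tfrac{k}{2k}=\tfrac{1}{2}$, which matches the assumption $f(U)\geq \tfrac{1}{2}(d_A^+(U)-d_A^-(U))$ exactly. For (i), with $\tau=2k$ the coefficient $\tfrac{\tau}{k}-1$ becomes $1$, and the inequality reduces to $d_A^+(U)+d_A^-(U)\geq 2k$ for every nonempty proper $U\subsetneq V$. This is precisely the condition that the underlying undirected graph of $D$ is $2k$-edge-connected, since $d_A^+(U)+d_A^-(U)$ equals the number of undirected edges in the cut determined by $U$. So the hypothesis in \Cref{NW} is equivalent to hypothesis (i) of \Cref{main-1} with $\tau=2k$.

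With both hypotheses in place, I would simply invoke \Cref{main-1} with parameters $\tau=2k$ and the given $k$, which produces a $k$-arc-connected flip $J$ satisfying $d_J^+(U)-d_J^-(U)\leq f(U)$ for all $U\in \mathcal{C}$, as required. There is no real obstacle here: the entire content of \Cref{NW} is the observation that $\tau=2k$ is a meaningful special case of \Cref{main-1}, in which the somewhat mysterious-looking cut inequality degenerates into the familiar $2k$-edge-connectivity of the underlying graph, making the statement look like a direct strengthening of Nash-Williams' weak orientation theorem.
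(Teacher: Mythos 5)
Your proposal is correct and matches the paper's approach exactly: the paper states Theorem~\ref{NW} with a terminal $\qed$, indicating it is precisely the specialization $\tau=2k$ of Theorem~\ref{main-1}, which is what you carry out. The two substitutions $\tfrac{\tau}{k}-1=1$ and $\tfrac{k}{\tau}=\tfrac12$ are exactly the observations needed, and you verify both.
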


A digraph is \emph{near-Eulerian} if at every vertex the in-degree and out-degree differ by at most one. \Cref{NW} implies the following well-known extension of the weak orientation theorem.

\begin{theorem}[\cite{Nash-Williams69}]
Let $D=(V,A)$ be a digraph whose underlying undirected graph is $2k$-edge-connected. Then there exists a $k$-arc-connected flip $J$ such that after flipping its arcs the digraph becomes near-Eulerian.
\end{theorem}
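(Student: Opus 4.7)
The plan is to apply \Cref{NW} to a carefully chosen crossing family and crossing submodular function that encode the near-Eulerian condition at every vertex. Concretely, I would let
\[
\mathcal{C}:=\{\{v\}:v\in V\}\cup \{V\setminus\{v\}:v\in V\}
\]
and define $f:\mathcal{C}\to\mathbb{Z}$ by $f(U):=\lceil (d_A^+(U)-d_A^-(U))/2\rceil$. The first step is the routine check that $\mathcal{C}$ is a crossing family and $f$ is crossing submodular: for any two distinct sets $U,W\in\mathcal{C}$, either $U\cap W=\emptyset$ or $U\cup W=V$, so the crossing condition is vacuous in both definitions. The hypothesis $f(U)\ge \frac{1}{2}(d_A^+(U)-d_A^-(U))$ follows immediately from the ceiling.

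Next I would invoke \Cref{NW} to obtain a $k$-arc-connected flip $J$ such that $d_J^+(U)-d_J^-(U)\le f(U)$ for every $U\in\mathcal{C}$. Applied to $U=\{v\}$ this yields $d_J^+(v)-d_J^-(v)\le \lceil (d_A^+(v)-d_A^-(v))/2\rceil$, and applied to $U=V\setminus\{v\}$ (using $d_A^\pm(V\setminus\{v\})=d_A^\mp(v)$ and similarly for $J$) it yields $d_J^-(v)-d_J^+(v)\le \lceil (d_A^-(v)-d_A^+(v))/2\rceil$, i.e.\ $d_J^+(v)-d_J^-(v)\ge \lfloor (d_A^+(v)-d_A^-(v))/2\rfloor$. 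Setting $\Delta(v):=d_A^+(v)-d_A^-(v)$, we therefore have
\[
\lfloor \Delta(v)/2\rfloor \le d_J^+(v)-d_J^-(v)\le \lceil \Delta(v)/2\rceil\qquad \forall v\in V.
\]

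The final step is to observe that the imbalance of vertex $v$ after flipping the arcs of $J$ equals $\Delta(v)-2(d_J^+(v)-d_J^-(v))$; substituting the bounds above shows this quantity equals $0$ when $\Delta(v)$ is even and lies in $\{-1,+1\}$ when $\Delta(v)$ is odd. Hence the resulting digraph is near-Eulerian, as required.

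The argument is almost entirely bookkeeping once the right $\mathcal{C}$ and $f$ are identified; the only mildly delicate point is ensuring that the two-sided control on $d_J^+(v)-d_J^-(v)$ is obtained by encoding \emph{both} $\{v\}$ and its complement in $\mathcal{C}$, so that the upper bound supplied by \Cref{NW} translates into a matching lower bound via complementation. I do not anticipate a genuine obstacle beyond confirming that this enlarged family remains crossing, which it does trivially.
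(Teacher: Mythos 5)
Your argument is essentially identical to the paper's proof: same choice of $\mathcal{C}=\{\{v\},V\setminus\{v\}:v\in V\}$ and $f(U)=\lceil (d_A^+(U)-d_A^-(U))/2\rceil$, same application of \Cref{NW}, and the same use of complementation to turn the one-sided upper bound into the two-sided pinching that gives near-Eulerianity. One correction is needed in the verification step. You assert that for any two distinct $U,W\in\mathcal{C}$, either $U\cap W=\emptyset$ or $U\cup W=V$, and hence that the crossing-family and crossing-submodularity conditions are vacuous. This is false: take $U=\{u\}$ and $W=V\setminus\{w\}$ with $u\neq w$ (and $|V|\geq 2$); then $U\cap W=\{u\}\neq\emptyset$ and $U\cup W=V\setminus\{w\}\neq V$. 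The conclusion still holds --- in this mixed case $U\cap W=U\in\mathcal{C}$ and $U\cup W=W\in\mathcal{C}$, so the closure property is immediate, and the submodular inequality $f(U\cap W)+f(U\cup W)\leq f(U)+f(W)$ holds with equality --- but the check is not vacuous, and the justification as you have written it is incorrect. (The paper itself glosses over this with ``Clearly'', so the fix costs you only a sentence.)
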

\begin{proof}
Let $\mathcal{C}:=\{\{u\},V\setminus u:u\in V\}$, and $f(U):=\left\lceil \frac{1}{2} (d_A^+(U)-d_A^-(U))\right\rceil$ for all $U\in \mathcal{C}$. Clearly, $\mathcal{C}$ is a crossing family, $f$ is a crossing submodular function over $\mathcal{C}$, and $f(U)\geq \frac{1}{2}(d_A^+(U)-d_A^-(U))$ for all $U\in \mathcal{C}$. It therefore follows from \Cref{NW} that there exists a $k$-arc-connected flip $J$ such that $f(U)\geq d_J^+(U)-d_J^-(U)$ for all $U\in \mathcal{C}$. In other words, for every vertex $u\in V$, \begin{equation}\label{eq:eulerian-1}
d_J^+(u)-d_J^-(u)\leq \left\lceil \frac{1}{2} \left(d_A^+(u)-d_A^-(u)\right)\right\rceil \end{equation} and
\begin{equation*}
d_J^+(V\setminus u)-d_J^-(V\setminus u)\leq
\left\lceil \frac{1}{2} \left(d_A^+(V\setminus u)-d_A^-(V\setminus u)\right)\right\rceil.
\end{equation*} The latter can be rewritten as \begin{equation*}
d_J^-(u)-d_J^+(u)\leq
\left\lceil \frac{1}{2} \left(d_A^-(u)-d_A^+(u)\right)\right\rceil;
\end{equation*} negating both sides, and reversing the inequality, we thus obtain \begin{equation}\label{eq:eulerian-2}
d_J^+(u)-d_J^-(u)
\geq
\left\lfloor \frac{1}{2} \left(d_A^+(u)-d_A^-(u)\right)\right\rfloor.
\end{equation}
Since
\eqref{eq:eulerian-1} and \eqref{eq:eulerian-2} hold for every $u\in V$, it follows that the digraph obtained after flipping the arcs in $J$ is near-Eulerian, as required.
\end{proof}

In fact, even in the conclusion of \Cref{NW} one can guarantee that after flipping the arcs in $J$ the digraph becomes near-Eulerian. This is obtained by updating $\mathcal{C}:=\mathcal{C}\cup \{\{u\},V\setminus u:u\in V\}$ and $f(U):=\left\lceil \frac{1}{2} (d_A^+(U)-d_A^-(U))\right\rceil$ for all $U\in \{\{u\},V\setminus u:u\in V\}$, and then applying \Cref{NW} to the updated crossing family and crossing submodular function.

\subsection{$k$-arc-connected flips and $k$-dijoins}

Before discussing the next set of applications of \Cref{main-1}, we need to set up the scene. Let $D=(V,A)$ be a digraph. A \emph{dicut} is an arc subset of the form $\delta^+(U)$ where $\delta^-(U)=\emptyset$, for some $U\subsetneq V, U\neq \emptyset$. A \emph{dijoin} is a subset $J\subseteq A$ that intersects every dicut at least once. Equivalently, $J$ is a dijoin if bidirecting every arc in $J$ makes the digraph $D$ strongly connected. In contrast, $J$ is a $1$-arc-connected flip if flipping every arc in $J$ makes the digraph strongly connected. Thus, every $1$-arc-connected flip is also a dijoin. It can be readily checked that the converse is not necessarily true. Interestingly, however, every inclusionwise minimal dijoin is a $1$-arc-connected flip (see~\cite{Schrijver03}, Theorem~55.1).
For an integer $k\geq 1$, a \emph{$k$-dijoin} is an arc subset that intersects every dicut at least $k$ times. Observe that the union of every $k$ disjoint dijoins is a $k$-dijoin (the converse is not necessarily true, see \Cref{fig:schrijver}). Moreover, we have the following important observation.

\begin{RE}\label{ACF->dijoin}
Given a digraph and an integer $k\geq 1$, every $k$-arc-connected flip is a $k$-dijoin.
\end{RE}

(The converse of this remark is not necessarily true even if the $k$-dijoin is inclusionwise minimal, see \Cref{fig:schrijver}).

\begin{figure}[ht]
\centering
\includegraphics[scale=0.3]{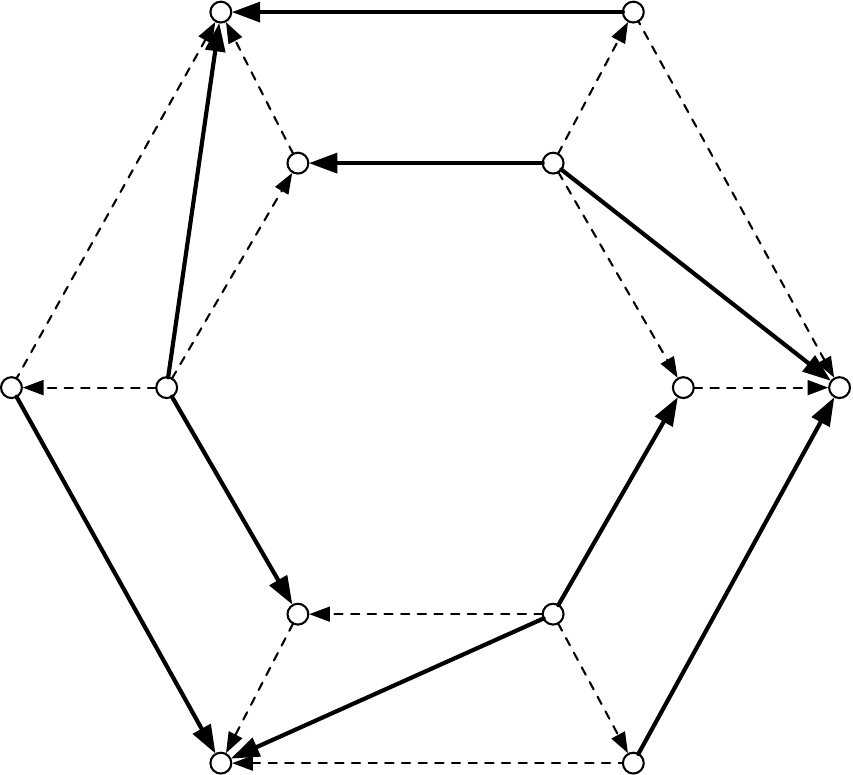}
\caption{The solid arcs form an inclusionwise minimal $2$-dijoin that cannot be decomposed into two dijoins~\cite{Schrijver80}, nor is it a $2$-arc-connected flip.}
\label{fig:schrijver}
\end{figure}

\subsection{Woodall's conjecture and a weaker variant}

A seminal result of Lucchesi and Younger is that the minimum size of a dijoin is equal to the maximum number of pairwise disjoint dicuts~\cite{Lucchesi78}. Woodall conjectured that the dual minimax relation also holds: the minimum size of a dicut is equal to the maximum number of pairwise disjoint dijoins~\cite{Woodall78}; this conjecture remains open. As a step towards the conjecture, it was recently shown that if the minimum size of a dicut is $\tau$, then $A$ may be decomposed into a dijoin and a $(\tau-1)$-dijoin~\cite{Abdi22+}. In fact, if Woodall's conjecture is true, then one should be able to decompose $A$ into a $k$-dijoin and a $(\tau-k)$-dijoin, for every integer $k\in \{1,\ldots,\tau-1\}$, but surprisingly even this remains open for $k\neq 1,\tau-1$. This leads us to the following weaker conjecture.

\begin{CN}\label{disjoint-dijoins-CN}
Let $\tau\geq 2$ be an integer.
Let $D=(V,A)$ be a digraph where every dicut has size at least $\tau$. Then $A$ can be decomposed into a $k$-dijoin and a $(\tau-k)$-dijoin, for every $k\in \{1,\ldots,\tau-1\}$.
\end{CN}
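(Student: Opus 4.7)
The plan is to apply \Cref{main-1} to $D=(V,A)$ with the crossing family $\mathcal{C}:=\{U\subsetneq V:U\neq\emptyset,\,\delta^-(U)=\emptyset\}$ of dicuts, and the crossing submodular function $f:\mathcal{C}\to\mathbb{Z}$ defined by $f(U):=d_A^+(U)-(\tau-k)$ (submodular since $d_A^+$ is submodular on dicuts). If the hypotheses of \Cref{main-1} are satisfied, its output is a $k$-arc-connected flip $J$ of $D$ with $d_J^+(U)-d_J^-(U)\leq f(U)$ for every dicut $U$. Since $d_J^-(U)=0$ whenever $\delta^-(U)=\emptyset$, this translates to $d_J^+(U)\leq d_A^+(U)-(\tau-k)$, so $|(A\setminus J)\cap \delta^+(U)|\geq\tau-k$ for every dicut; hence $A\setminus J$ is a $(\tau-k)$-dijoin, while $J$ is a $k$-dijoin by \Cref{ACF->dijoin}, giving the desired decomposition.

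The first hypothesis of \Cref{main-1}, namely $f(U)\geq \tfrac{k}{\tau}(d_A^+(U)-d_A^-(U))$, reduces on dicuts to $d_A^+(U)\geq \tau$, which is precisely the assumption of the conjecture. The second hypothesis, $k d_A^+(U)+(\tau-k)d_A^-(U)\geq k\tau$ for \emph{every} nonempty proper $U$, is the delicate one. By exchanging $k$ with $\tau-k$ (swapping the roles of $J$ and $A\setminus J$) we may assume $k\leq \tau/2$; then the inequality is immediate on dicuts (where it is the dicut condition) and on reverse dicuts (where $d_A^-(U)\geq \tau\geq \tfrac{k\tau}{\tau-k}$ holds once $k\leq\tau/2$, because $V\setminus U$ is a dicut). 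When $k=1$, the inequality $d_A^+(U)+(\tau-1)d_A^-(U)\geq \tau$ is immediate on any $U$ with $d_A^-(U)\geq 1$, so the boundary cases $k=1$ and (symmetrically) $k=\tau-1$ follow at once from \Cref{main-1}, recovering the known decomposition of~\cite{Abdi22+}.

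The main obstacle lies in the intermediate range $2\leq k\leq \tau-2$. On a set $U$ with both $d_A^+(U),d_A^-(U)\geq 1$ that is neither a dicut nor a reverse dicut, the inequality $k d_A^+(U)+(\tau-k)d_A^-(U)\geq k\tau$ is strictly stronger than the dicut condition; in the extremal case $k=\tau/2$ it is equivalent to the underlying undirected graph being $\tau$-edge-connected, which can genuinely fail when every dicut merely has size~$\geq\tau$. The LP relaxation is nonetheless feasible, as $\bar y:=\tfrac{k}{\tau}\1$ satisfies $\bar y(\delta^+(U))\in[k,\,d_A^+(U)-(\tau-k)]$ on every dicut, so a natural attempt is to round $\bar y$ integrally by applying \Cref{main-0}(b) to the pair of submodular flow systems enforcing the $k$-dijoin constraint (on reverse dicuts) and the $(\tau-k)$-dijoin complement constraint (on dicuts), with $\ell=\0,u=\1$. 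However, the cut condition~\eqref{eq:cut-condition} then fails precisely at those $U$ that lie in neither crossing family, which is exactly the point where \Cref{main-1}'s hypothesis was also out of reach. Closing this gap appears to require either a genuine strengthening of \Cref{main-1} with a weakened cut hypothesis, or an iterative argument that repeatedly peels off a dijoin via the $k=1$ case while tracking how the dicut structure of the residual arc set evolves under arc deletion.
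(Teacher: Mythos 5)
The statement you were asked to prove is, in the paper, explicitly labeled a \emph{conjecture} (\Cref{disjoint-dijoins-CN}) and remains open; the paper itself does not prove it, only partial results toward it. Your write-up is therefore the right response rather than a failed proof: you correctly set up the intended route through \Cref{main-1} with the dicut crossing family and $f(U)=d_A^+(U)-(\tau-k)$, you verify that the hypothesis $f(U)\geq\tfrac{k}{\tau}(d_A^+(U)-d_A^-(U))$ reduces on dicuts to the assumed bound $d_A^+(U)\geq\tau$, and you correctly locate the obstruction in the other hypothesis $d_A^+(U)+(\tfrac{\tau}{k}-1)d_A^-(U)\geq\tau$, which must hold for \emph{every} nonempty proper $U$, not just dicuts. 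Your case analysis matches what the paper actually establishes: \Cref{main-2} is precisely the version conditioned on that stronger cut inequality; \Cref{origin} covers $k=1$ (and by symmetry $k=\tau-1$), as you recover; and \Cref{disjoint-dijoins} together with \Cref{disjoint-dijoins-2} prove the conjecture under the additional hypothesis that the underlying undirected graph is (essentially) $\tau$-edge-connected, which is exactly the strengthening you observe to be forced at $k=\lfloor\tau/2\rfloor$, where the inequality becomes $d_A^+(U)+d_A^-(U)\geq\tau$. Your further remark that the LP point $\bar y=\tfrac{k}{\tau}\1$ is feasible yet the cut condition~\eqref{eq:cut-condition} fails for sets outside both crossing families is also the correct diagnosis of why \Cref{main-0}(b) does not rescue the argument. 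In short, there is no flaw in your reasoning; the gap is in the state of the art, and the paper leaves \Cref{disjoint-dijoins-CN} open for $2\leq k\leq\tau-2$ just as you conclude.

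One very minor nit: in the $k=1$ case you say the inequality ``is immediate on any $U$ with $d_A^-(U)\geq 1$,'' but this also uses $d_A^+(U)\geq 1$; when $d_A^+(U)=0$ you must appeal to the reverse-dicut case, which you had already covered, so the logic is sound but the phrasing is slightly loose.
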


\Cref{main-1} has the following consequence that relates to \Cref{disjoint-dijoins-CN}.

\begin{theorem}\label{main-2}
Let $\tau,k$ be integers such that $\tau-1\geq k\geq 1$. Let $D=(V,A)$ be a digraph where $d_A^+(U)+(\frac{\tau}{k}-1)d_A^-(U)\geq \tau$ for all $U\subsetneq V, U\neq \emptyset$. Then $A$ can be decomposed into a $k$-arc-connected flip and a $(\tau-k)$-dijoin.
\end{theorem}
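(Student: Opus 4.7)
The plan is to apply Theorem~\ref{main-1} with a carefully chosen crossing family $\mathcal{C}$ and crossing submodular function $f$ that encode the $(\tau-k)$-dijoin requirement on the complement of $J$. Specifically, I would take
\[
\mathcal{C} := \{U\subsetneq V : U\neq \emptyset,\ \delta^-(U)=\emptyset\} \quad \text{and} \quad f(U) := d_A^+(U) - (\tau - k) \text{ for } U\in\mathcal{C}.
\]
The first task is to check that $\mathcal{C}$ is a crossing family: if $U,W\in\mathcal{C}$ with $U\cap W\neq\emptyset$ and $U\cup W\neq V$, then from the standard submodular inequality $d_A^-(U\cap W)+d_A^-(U\cup W)\leq d_A^-(U)+d_A^-(W)=0$ and nonnegativity, one concludes $\delta^-(U\cap W)=\delta^-(U\cup W)=\emptyset$. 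Crossing submodularity of $f$ on $\mathcal{C}$ then follows from the (unconditional) submodularity of $d_A^+$, since the additive constant $-(\tau-k)$ is harmless.

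Next I would verify the hypothesis $f(U)\geq \tfrac{k}{\tau}(d_A^+(U)-d_A^-(U))$ for $U\in\mathcal{C}$. Since $d_A^-(U)=0$ on $\mathcal{C}$, this reduces to $d_A^+(U)-(\tau-k)\geq \tfrac{k}{\tau}d_A^+(U)$, equivalently $d_A^+(U)\cdot\tfrac{\tau-k}{\tau}\geq \tau-k$, i.e.\ $d_A^+(U)\geq \tau$. This is exactly what the digraph-wide inequality $d_A^+(U)+(\tfrac{\tau}{k}-1)d_A^-(U)\geq \tau$ says in the special case $d_A^-(U)=0$, which is the defining property of members of $\mathcal{C}$ (i.e.\ every dicut has size at least $\tau$).

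Now Theorem~\ref{main-1} delivers a $k$-arc-connected flip $J\subseteq A$ with $f(U)\geq d_J^+(U)-d_J^-(U)$ for all $U\in\mathcal{C}$. Because $d_J^-(U)=0$ for $U\in\mathcal{C}$, this rearranges to $d_{A\setminus J}^+(U)=d_A^+(U)-d_J^+(U)\geq \tau-k$ for every such $U$. Since $\mathcal{C}$ indexes precisely the sets whose outgoing cut is a dicut, this is exactly the statement that $A\setminus J$ is a $(\tau-k)$-dijoin, completing the decomposition.

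The argument is entirely a matter of packaging the hypotheses correctly; there is no genuine obstacle beyond checking that the chosen $\mathcal{C}$ is crossing and that the numerical inequality $f(U)\geq \tfrac{k}{\tau}d_A^+(U)$ matches the dicut lower bound $\tau$. The condition $k\leq \tau-1$ enters only implicitly, through $\tau-k\geq 1$, which ensures $f$ is a nontrivial constraint (and that a $(\tau-k)$-dijoin is a meaningful notion).
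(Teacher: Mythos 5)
Your proposal is correct and takes essentially the same approach as the paper: you pick the same crossing family $\mathcal{C}$ of dicut-defining sets, the same function $f(U)=d_A^+(U)-(\tau-k)$, verify the same hypothesis $f(U)\geq\frac{k}{\tau}(d_A^+(U)-d_A^-(U))$ via $d_A^+(U)\geq\tau$ on $\mathcal{C}$, and read off the $(\tau-k)$-dijoin from the conclusion of Theorem~\ref{main-1} exactly as the paper does. The only (harmless) cosmetic difference is that the paper notes $f$ is in fact crossing modular on $\mathcal{C}$, while you only observe submodularity, which suffices.
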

\begin{proof}
Let $\mathcal{C}$ be the family of subsets $U\subseteq V$ such that $\delta^+(U)$ is a dicut. Then $\mathcal{C}$ is a crossing family. Let $f:\mathcal{C}\to \mathbb{Z}$ be the function defined as $f(U):=d_A^+(U)-(\tau-k)$ for all $U\in \mathcal{C}$. Then $f$ is a crossing submodular (in fact, modular) function.
The inequalities
$d_A^+(U)+(\frac{\tau}{k}-1)d_A^-(U)\geq \tau$ for all $U\subsetneq V, U\neq \emptyset$, imply that $d_A^+(U)\geq \tau$ for all $U\in \mathcal{C}$, which in turn imply that $$f(U)\geq \frac{k}{\tau}d_A^+(U) =  \frac{k}{\tau}(d_A^+(U)-d_A^-(U)) \quad \forall U\in \mathcal{C};$$ the first inequality holds because $f(U)-\frac{k}{\tau}d_A^+(U) = \frac{\tau-k}{\tau}(d_A^+(U)-\tau)\geq 0$ for all $U\in \mathcal{C}$. We can now apply \Cref{main-1} to get a $k$-arc-connected flip $J$ such that $d_J^+(U)-d_J^-(U)\leq f(U)$ for all $U\in \mathcal{C}$. That is, for every dicut $\delta^+(U)$, we have $d_J^+(U)\leq d_A^+(U)-(\tau-k)$ which can be rewritten as $d_{A-J}^+(U)\geq \tau-k$. Thus, $A-J$ is a $(\tau-k)$-dijoin, implying that $(J,A-J)$ is the desired decomposition.
\end{proof}

Observe that the complement of a $k$-arc-connected flip is also a $k$-arc-connected flip, so for $\tau=2k$, \Cref{main-2} reduces simply to the weak orientation theorem, so our theorem extends this classic result in a different direction than \Cref{NW}.


Let us discuss two special cases of \Cref{main-2}. In the special case $k=1$, we obtain the following.

\begin{theorem}[\cite{Abdi22+}]\label{origin}
Let $\tau\geq 2$ be an integer.
Let $D=(V,A)$ be a digraph where every dicut has size at least $\tau$. Then $A$ can be decomposed into a dijoin $J$ and a $(\tau-1)$-dijoin $J'$.
\end{theorem}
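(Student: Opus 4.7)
The plan is to derive this as the $k=1$ instantiation of Theorem~\ref{main-2}, so the whole task reduces to verifying that the hypothesis of Theorem~\ref{main-2} is implied by the dicut-size assumption, and then translating the conclusion into the desired decomposition.

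First I would set $k=1$ in Theorem~\ref{main-2} and check its cut inequalities, namely $d_A^+(U)+(\tau-1)d_A^-(U)\geq \tau$ for every nonempty $U\subsetneq V$. I would split into cases according to whether $\delta^-(U)$ is empty. If $\delta^-(U)=\emptyset$, then $\delta^+(U)$ is a dicut, so by hypothesis $d_A^+(U)\geq \tau$ and the inequality is immediate. If $d_A^-(U)\geq 1$ and $d_A^+(U)\geq 1$, then the inequality follows from $1+(\tau-1)\cdot 1=\tau$. The remaining case is $d_A^+(U)=0$ and $d_A^-(U)\geq 1$; here $\delta^+(V\setminus U)=\delta^-(U)$ is a dicut, so $d_A^-(U)\geq \tau$, and then $(\tau-1)d_A^-(U)\geq (\tau-1)\tau\geq \tau$ (using $\tau\geq 2$).

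Having verified the hypothesis, Theorem~\ref{main-2} hands us a decomposition $A=J\cup J'$ into a $1$-arc-connected flip $J$ and a $(\tau-1)$-dijoin $J'$. To finish, I would invoke Remark~\ref{ACF->dijoin}, which asserts that every $k$-arc-connected flip is a $k$-dijoin; specialized to $k=1$, this tells us that $J$ is a dijoin, as required.

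There is no real obstacle: once Theorem~\ref{main-2} is in hand, the entire argument consists of the elementary case analysis above together with the trivial observation that a $1$-arc-connected flip is a dijoin.
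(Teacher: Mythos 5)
Your proof is correct and follows essentially the same route as the paper's: verify the hypothesis of Theorem~\ref{main-2} with $k=1$ by a short case analysis on whether $\delta^+(U)$ or $\delta^-(U)$ is empty, then apply Theorem~\ref{main-2} and use Remark~\ref{ACF->dijoin} to convert the $1$-arc-connected flip into a dijoin. The only cosmetic difference is that you split the argument into three explicit cases where the paper compresses it into two; the substance is identical.
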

\begin{proof}
We claim that $d_A^+(U)+(\tau-1)d_A^-(U)\geq \tau$ for all $U\subsetneq V,U\neq \emptyset$. This holds because either $\delta^+(U),\delta^-(U)\neq \emptyset$ or one of $\delta^+(U),\delta^-(U)$ is a dicut. In the former case, $d_A^+(U)+(\tau-1)d_A^-(U)\geq 1+(\tau-1)= \tau$, and in the latter case, the dicut must have size at least $\tau$ by assumption, so the claimed inequality holds. It therefore follows from \Cref{main-2} for $k=1$ that $A$ can be decomposed into a $1$-arc-connected flip, which necessarily is a dijoin by \Cref{ACF->dijoin}, and a $(\tau-1)$-dijoin, as required.
\end{proof}

This theorem was proved recently in an attempt to prove Woodall's conjecture by first reducing the problem to a special class of \emph{sink-regular $(\tau,\tau+1)$-bipartite digraphs}~\cite{Abdi22+}. The proof we have given here bypasses this reduction.

The reader may wonder why this theorem does not automatically prove Woodall's conjecture, as one may try to repeat the argument on the subdigraph $D\setminus J$. A key complication comes from the fact that deleting an arc from $D$ may create a new dicut, whose size may unfavourably be smaller than $\tau-1$.
Another comes from the fact that given the decomposition $J\cup J'$, one may not necessarily be able to further decompose $J'$ into $\tau-1$ dijoins~\cite{Abdi22+}.

The second special case of \Cref{main-2} we consider is the case $k=\lfloor \tau/2\rfloor$.

\begin{theorem}\label{disjoint-dijoins}
Let $\tau\geq 2$ be an integer. Let $D=(V,A)$ be a digraph where every dicut has size at least~$\tau$. Suppose further that every cut of $D$ has size at least $\tau-1$, i.e., $|\delta^+(U)|+|\delta^-(U)|\geq \tau-1$ for all $U\subsetneq V,U\neq \emptyset$, and if equality holds, then the number of outgoing arcs is equal to the number of incoming arcs. Then $A$ can be decomposed into a $k$-arc-connected flip and a $(\tau-k)$-dijoin, for every $k\in \{1,\ldots,\lfloor \tau/2 \rfloor\}$.
\end{theorem}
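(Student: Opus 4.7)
The plan is to reduce directly to \Cref{main-2} by verifying its degree hypothesis, namely that
\[
d_A^+(U)+\left(\tfrac{\tau}{k}-1\right)d_A^-(U)\geq \tau \qquad \forall U\subsetneq V,\ U\neq \emptyset.
\]
Note that because $k\leq \lfloor \tau/2\rfloor$, the coefficient $\tfrac{\tau}{k}-1\geq 1$, which will be used repeatedly.

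I would prove the hypothesis by a case analysis on $U$. First, if $\delta^-(U)=\emptyset$ then $\delta^+(U)$ is a dicut, so by assumption $d_A^+(U)\geq \tau$ and the inequality holds. Symmetrically, if $\delta^+(U)=\emptyset$, then $\delta^-(V\setminus U)=\emptyset$, so $d_A^-(U)=d_A^+(V\setminus U)\geq \tau$, and since $\tfrac{\tau}{k}\geq 2$, we get $(\tfrac{\tau}{k}-1)d_A^-(U)\geq \tau$. The remaining case is that both $\delta^+(U)$ and $\delta^-(U)$ are nonempty. Write $a:=d_A^+(U)$, $b:=d_A^-(U)$, so $a,b\geq 1$. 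By the cut hypothesis, $a+b\geq \tau-1$. If $a+b\geq \tau$, then using $\tfrac{\tau}{k}-1\geq 1$ we obtain $a+(\tfrac{\tau}{k}-1)b\geq a+b\geq \tau$, and we are done.

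The delicate case, which I expect to be the main obstacle, is the tight case $a+b=\tau-1$. The extra hypothesis of the theorem forces $a=b$ in this case, hence $\tau$ is odd and $a=b=\tfrac{\tau-1}{2}$. Then
\[
a+\left(\tfrac{\tau}{k}-1\right)b = a\cdot \tfrac{\tau}{k} = \tfrac{(\tau-1)\tau}{2k},
\]
and this is at least $\tau$ precisely when $k\leq \tfrac{\tau-1}{2}$; since $\tau$ is odd this is exactly $\lfloor \tau/2\rfloor$, and by hypothesis $k\leq \lfloor \tau/2 \rfloor$, so the inequality holds with equality allowed. This is what makes the ``cut of size $\tau-1$ with balanced orientations'' side condition exactly what is needed.

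Having verified the hypothesis of \Cref{main-2}, I would invoke that theorem to conclude that $A$ decomposes into a $k$-arc-connected flip and a $(\tau-k)$-dijoin. Since $k$ was arbitrary in $\{1,\ldots,\lfloor \tau/2\rfloor\}$, this yields the desired conclusion.
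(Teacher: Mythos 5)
Your proof is correct and takes essentially the same approach as the paper: verify the degree hypothesis of Theorem~\ref{main-2} by splitting into the cases $d_A^+(U)+d_A^-(U)\geq\tau$ (where $\frac{\tau}{k}-1\geq 1$ suffices) and $d_A^+(U)+d_A^-(U)=\tau-1$ (where the balance condition forces $d_A^+(U)=d_A^-(U)=\frac{\tau-1}{2}$ and $k\leq\lfloor\tau/2\rfloor=\frac{\tau-1}{2}$ does the rest). The only difference is cosmetic: you additionally break out the dicut cases $\delta^+(U)=\emptyset$ or $\delta^-(U)=\emptyset$ and appeal to the dicut hypothesis there, but these are already covered by the two cases above (in the tight case $d_A^+(U)=d_A^-(U)$ rules out a vanishing side when $\tau\geq 2$), so the paper omits this step.
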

\begin{proof}
Let $k\in \{1,\ldots,\lfloor \tau/2 \rfloor\}$. We claim that $d_A^+(U)+(\frac{\tau}{k}-1)d_A^-(U)\geq \tau$ for all $U\subsetneq V,U\neq \emptyset$. We know that $d_A^+(U)+d_A^-(U)\geq \tau-1$. If equality holds, then by assumption, $d_A^+(U)=d_A^-(U)=\frac{\tau-1}{2}$, so $d_A^+(U)+(\frac{\tau}{k}-1)d_A^-(U) = \frac{\tau-1}{2}\cdot \frac{\tau}{k}\geq \tau$. Otherwise, $d_A^+(U)+d_A^-(U)\geq \tau$, so $d_A^+(U)+(\frac{\tau}{k}-1)d_A^-(U)\geq
d_A^+(U)+d_A^-(U)\geq \tau$, proving the claimed inequality. The theorem now follows from \Cref{main-2}.
\end{proof}

In particular, this proves \Cref{disjoint-dijoins-CN} when the underlying undirected graph is $\tau$-edge-connected:

\begin{theorem}\label{disjoint-dijoins-2}
Let $\tau\geq 2$ be an integer.
If $D=(V,A)$ is a digraph whose underlying undirected graph is $\tau$-edge-connected, then $A$ can be decomposed into a $k$-dijoin and a $(\tau-k)$-dijoin, for every $k\in [\tau-1]$.
\end{theorem}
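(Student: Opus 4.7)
The plan is to deduce \Cref{disjoint-dijoins-2} directly from \Cref{disjoint-dijoins}, combined with \Cref{ACF->dijoin} and the symmetry $k\leftrightarrow \tau-k$ inherent in the statement. The hard part has effectively already been done inside \Cref{disjoint-dijoins}; what remains is mostly bookkeeping.

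The first step is to verify the hypotheses of \Cref{disjoint-dijoins}. Because the underlying undirected graph of $D$ is $\tau$-edge-connected, we have $|\delta^+(U)|+|\delta^-(U)|\geq \tau$ for every $\emptyset\neq U\subsetneq V$, which is in particular $\geq \tau-1$. Furthermore, the inequality is strict, so the tie case $|\delta^+(U)|+|\delta^-(U)|=\tau-1$ never occurs and the in-degree-equals-out-degree condition in \Cref{disjoint-dijoins} is vacuously satisfied. In addition, for any dicut $\delta^+(U)$ we have $\delta^-(U)=\emptyset$, so $|\delta^+(U)|=|\delta^+(U)|+|\delta^-(U)|\geq \tau$, confirming the hypothesis that every dicut has size at least $\tau$.

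The second step covers the range $k\in\{1,\ldots,\lfloor \tau/2\rfloor\}$: applying \Cref{disjoint-dijoins} directly yields a decomposition $A=J\sqcup J'$, where $J$ is a $k$-arc-connected flip and $J'$ is a $(\tau-k)$-dijoin. By \Cref{ACF->dijoin}, $J$ is itself a $k$-dijoin, so $(J,J')$ is the desired decomposition of $A$ into a $k$-dijoin and a $(\tau-k)$-dijoin.

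The third step handles $k\in\{\lfloor \tau/2\rfloor+1,\ldots,\tau-1\}$ by symmetry. Setting $k':=\tau-k$, a quick case check (separately for $\tau$ even and odd) shows $k'\in\{1,\ldots,\lceil \tau/2\rceil-1\}\subseteq\{1,\ldots,\lfloor \tau/2\rfloor\}$. Applying the previous case with $k'$ yields a decomposition of $A$ into a $k'$-dijoin and a $(\tau-k')$-dijoin, i.e., a $(\tau-k)$-dijoin and a $k$-dijoin; swapping the labels of the two parts produces the required decomposition. Since a decomposition of $A$ into two sets is unordered, no obstacle arises here, and the proof is complete.
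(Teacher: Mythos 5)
Your proof is correct and follows essentially the same route as the paper: reduce to $k\leq\lfloor\tau/2\rfloor$ by the symmetry $k\leftrightarrow\tau-k$, verify that $\tau$-edge-connectivity gives the hypotheses of \Cref{disjoint-dijoins} (with the equality case vacuous), then invoke \Cref{disjoint-dijoins} and \Cref{ACF->dijoin}. The only difference is that you spell out the symmetry reduction and hypothesis check in a bit more detail than the paper does.
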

\begin{proof}
By symmetry we may assume that $k\leq \tau-k$, so $k\in \{1,\ldots,\lfloor \tau/2 \rfloor\}$. Thus, since every cut of $D$ has size at least $\tau$, we may apply \Cref{disjoint-dijoins} to decompose $A$ into a $k$-arc-connected flip, which necessarily is a $k$-dijoin by \Cref{ACF->dijoin}, and a $(\tau-k)$-dijoin.
\end{proof}

\Cref{disjoint-dijoins-2} suggests that it may be easier to prove Woodall's conjecture for $\tau$-edge-connected instances. After all, if the underlying undirected graph has $\tau$ disjoint spanning trees, which is guaranteed by $2\tau$-edge-connectivity for instance~\cite{Nash-Williams61,Tutte61}, then the digraph has $\tau$ disjoint dijoins. It should be noted that Woodall's conjecture for $\tau=3$ has been proven for $4$-edge-connected instances, that is, if the underlying undirected graph is $4$-edge-connected, then the digraph contains 3 disjoint dijoins~\cite{Meszaros18}.

\subsection{Packing dijoins in weighted digraphs}

Finally, \Cref{main-1} leads to an intriguing extension of \Cref{main-2} to a setting where arcs are assigned nonnegative integer weights, viewed as capacities for packing dijoins. By replacing an arc of weight $t\geq 1$ with $t$ parallel arcs of weight $1$, we may reduce to $0,1$ weights, so we can focus solely on them. Given a digraph $D=(V,A)$ and $J\subseteq A$, denote by $D[J]$ the subdigraph with vertex set $V$ and arc set~$J$.

\begin{theorem}\label{main-3}
Let $\tau,k$ be integers such that $\tau-1\geq k\geq 1$.
Let $D=(V,A)$ be a digraph, and $w\in \{0,1\}^A$. Suppose $w(\delta^+(U))+(\frac{\tau}{k}-1)w(\delta^-(U))\geq \tau$ for all $U\subsetneq V$, $U\neq \emptyset$. Then $\{a\in A:w_a=1\}$ can be decomposed into a $k$-arc-connected flip of $D[\{a\in A:w_a=1\}]$ and a $(\tau-k)$-dijoin of $D$.
\end{theorem}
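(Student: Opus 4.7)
Set $A_1:=\{a\in A:w_a=1\}$ and $A_0:=\{a\in A:w_a=0\}$, so that the hypothesis reads $d_{A_1}^+(U)+(\frac{\tau}{k}-1)d_{A_1}^-(U)\geq \tau$ for all $U\subsetneq V$, $U\neq \emptyset$. The plan is to apply \Cref{main-1} to the subdigraph $D[A_1]$, with a crossing family $\mathcal{C}$ and crossing submodular function $f$ tailored so that the resulting flip $J\subseteq A_1$ leaves behind a $(\tau-k)$-dijoin of the \emph{full} digraph $D$.

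Concretely, I would take $\mathcal{C}:=\{U\subsetneq V: U\neq \emptyset,\ \delta_D^-(U)=\emptyset\}$, i.e.\ the family of vertex sets whose outgoing cut in $D$ is a dicut. This is a standard crossing family. Define $f:\mathcal{C}\to \mathbb{Z}$ by $f(U):=d_{A_1}^+(U)-(\tau-k)$; since the cut function $U\mapsto d_{A_1}^+(U)$ is submodular on $2^V$ and the additive constant is modular, $f$ is crossing submodular on $\mathcal{C}$. I then need to check the two hypotheses of \Cref{main-1} applied to $D[A_1]$: the cut inequality is exactly the hypothesis of \Cref{main-3}; and for $U\in \mathcal{C}$ one has $d_{A_1}^-(U)=0$, so the hypothesis reduces to $d_{A_1}^+(U)\geq \tau$, which is precisely the cut inequality with $d_{A_1}^-(U)=0$ plugged in. The required bound $f(U)\geq \frac{k}{\tau}(d_{A_1}^+(U)-d_{A_1}^-(U))$ then simplifies to $(1-\frac{k}{\tau})d_{A_1}^+(U)\geq \tau-k$, which follows from $d_{A_1}^+(U)\geq \tau$.

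\Cref{main-1} then yields a $k$-arc-connected flip $J\subseteq A_1$ of $D[A_1]$ with $d_J^+(U)-d_J^-(U)\leq f(U)$ for every $U\in \mathcal{C}$. For such $U$ we have $d_J^-(U)\leq d_{A_1}^-(U)=0$, so $d_J^+(U)\leq d_{A_1}^+(U)-(\tau-k)$, equivalently $d_{A_1\setminus J}^+(U)\geq \tau-k$. Since every dicut of $D$ has the form $\delta_D^+(U)$ for some $U\in \mathcal{C}$, this says $A_1\setminus J$ meets every dicut of $D$ in at least $\tau-k$ arcs, i.e.\ it is a $(\tau-k)$-dijoin of $D$. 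Together with the fact that $J$ is a $k$-arc-connected flip of $D[A_1]$, the decomposition $A_1=J\sqcup(A_1\setminus J)$ is as required.

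There is no real obstacle here once the right $\mathcal{C}$ and $f$ are chosen; the only subtlety worth highlighting is that the $k$-arc-connected flip requirement is enforced only within $D[A_1]$, while the dijoin requirement is enforced against the dicut structure of the \emph{full} digraph $D$, and the choice of $\mathcal{C}$ as the dicut-family of $D$ (rather than of $D[A_1]$) is what couples these two requirements through a single application of \Cref{main-1}.
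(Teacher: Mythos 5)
Your proposal is correct and matches the paper's own proof essentially step for step: you apply \Cref{main-1} to $D[A_1]$ with the same crossing family $\mathcal{C}$ (the dicut family of the full digraph $D$), the same crossing modular function $f(U)=d_{A_1}^+(U)-(\tau-k)$, and the same verification that $d_{A_1}^+(U)\geq \tau$ for $U\in\mathcal{C}$ gives $f(U)\geq\frac{k}{\tau}(d_{A_1}^+(U)-d_{A_1}^-(U))$. The one subtlety you single out -- that $\mathcal{C}$ must be taken as the dicut family of $D$, not of $D[A_1]$, to make the leftover arcs a $(\tau-k)$-dijoin of $D$ -- is exactly the point of the theorem, and the paper handles it the same way.
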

\begin{proof}
The proof is similar to that of \Cref{main-2}. This time, however, we apply \Cref{main-1} to the digraph $D[\{a\in A:w_a=1\}]$, denoted as $D'=(V,A')$, the crossing family $\mathcal{C}:=\{U\subsetneq V: \delta_D^-(U)=\emptyset,U\neq \emptyset\}$, and the crossing submodular function $f:\mathcal{C}\to \mathbb{Z}$ defined as $f(U)=d_{A'}^+(U)-(\tau-k)=w(\delta_D^+(U))-(\tau-k)$. We include the proof for completeness.

The inequalities
$w(\delta_D^+(U))+(\frac{\tau}{k}-1)w(\delta_D^-(U))\geq \tau$ for all $U\subsetneq V, U\neq \emptyset$, imply that $w(\delta_D^+(U))\geq \tau$ for all $U\in \mathcal{C}$, which in turn imply that $$f(U)\geq \frac{k}{\tau}w(\delta_D^+(U)) =  \frac{k}{\tau}(w(\delta_D^+(U))-w(\delta_D^-(U)))=\frac{k}{\tau}(d_{A'}^+(U)-d_{A'}^-(U)) \quad \forall U\in \mathcal{C}.$$
We may therefore apply \Cref{main-1} to get a $k$-arc-connected flip $J\subseteq A'$ of $D'$ such that $d_J^+(U)-d_J^-(U)\leq f(U)$ for all $U\in \mathcal{C}$. That is, for every dicut $\delta_D^+(U)$ of $D$, $d_J^+(U)\leq d_{A'}^+(U)-(\tau-k)$ which can be rewritten as $
d_{A'-J}^+(U)\geq \tau-k$. Thus, $A'-J$ is a $(\tau-k)$-dijoin of $D$, implying that $(J,A'-J)$ is the desired decomposition.
\end{proof}

By specializing \Cref{main-3} to $k=1$ we obtain the following.

\begin{theorem}\label{main-3-CO}
Let $D=(V,A)$ be a digraph, let $\tau\geq 2$ be an integer, and $w\in \{0,1\}^A$.
Suppose $\min\{w(\delta^+(U)),w(\delta^-(U))\}\geq 1$ or $\max\{w(\delta^+(U)),w(\delta^-(U))\}\geq \tau$ for all $U\subsetneq V$, $U\neq \emptyset$. Then $\{a\in A:w_a=1\}$ can be decomposed into a dijoin and a $(\tau-1)$-dijoin of $D$.\qed
\end{theorem}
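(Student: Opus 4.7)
The plan is to deduce \Cref{main-3-CO} as a direct specialization of \Cref{main-3} with $k=1$. Concretely, I would first verify that the hypothesis of \Cref{main-3} is satisfied, namely that $w(\delta^+(U))+(\tau-1)w(\delta^-(U))\geq \tau$ for every $U\subsetneq V,U\neq \emptyset$. This reduces to a short case split on the two disjunctive conditions in the statement. If $\min\{w(\delta^+(U)),w(\delta^-(U))\}\geq 1$, then both $w(\delta^+(U))\geq 1$ and $(\tau-1)w(\delta^-(U))\geq \tau-1$, so the sum is at least $\tau$. If instead $\max\{w(\delta^+(U)),w(\delta^-(U))\}\geq \tau$, then either $w(\delta^+(U))\geq \tau$ already gives the bound, or $w(\delta^-(U))\geq \tau$, in which case $(\tau-1)w(\delta^-(U))\geq (\tau-1)\tau\geq \tau$ using $\tau\geq 2$.

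Having verified the hypothesis, \Cref{main-3} with $k=1$ supplies a decomposition of $A':=\{a\in A:w_a=1\}$ into a $1$-arc-connected flip $J$ of $D':=D[A']$ and a $(\tau-1)$-dijoin $J'$ of $D$. The $(\tau-1)$-dijoin part is already the right kind of object in $D$, so the only remaining step is to upgrade $J$ from a $1$-arc-connected flip of $D'$ into a dijoin of $D$.

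The remaining step is where I would pause to check carefully, and it is the one place the argument is not completely cosmetic. By \Cref{ACF->dijoin}, $J$ is a dijoin of $D'$, so $J$ meets every dicut of $D'$. I would then show that every dicut of $D$ restricts to a dicut of $D'$: fix $U\subsetneq V,U\neq \emptyset$ with $\delta_D^-(U)=\emptyset$, so that $w(\delta_D^-(U))=0$. Since the condition $\min\{w(\delta_D^+(U)),w(\delta_D^-(U))\}\geq 1$ cannot hold, the hypothesis forces $w(\delta_D^+(U))\geq \tau\geq 1$, so $\delta_{D'}^+(U)$ is nonempty; moreover $\delta_{D'}^-(U)\subseteq \delta_D^-(U)=\emptyset$, so $\delta_{D'}^+(U)$ is indeed a dicut of $D'$. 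Because $\delta_{D'}^+(U)\subseteq \delta_D^+(U)$, any arc of $J$ hitting $\delta_{D'}^+(U)$ also hits $\delta_D^+(U)$, proving that $J$ is a dijoin of $D$.

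I do not anticipate a genuine obstacle here; the only subtlety worth flagging is that \Cref{main-3} produces a $1$-arc-connected flip of the subdigraph $D[A']$ rather than of $D$, and one must use the disjunctive hypothesis precisely to rule out the degenerate situation in which a dicut of $D$ has no weight-$1$ arcs and would therefore be missed by $J$.
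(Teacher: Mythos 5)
Your proof is correct and matches the paper's intent, which is to obtain \Cref{main-3-CO} as the $k=1$ specialization of \Cref{main-3} (the paper signals this by appending a \qed and omitting any further argument). The one step the paper treats as implicit --- passing from a $1$-arc-connected flip of $D[A']$ (a dijoin of $D[A']$) to a dijoin of $D$ --- is exactly the point you rightly flag and justify, and your verification of the hypothesis of \Cref{main-3} via the two-case split is also correct.
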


\section{Discussion on computational complexity}\label{sec:complexity}

To discuss the computational complexity of \Cref{main-1} and \Cref{main-0}, we need to make some standard assumptions about how the crossing families and the crossing submodular functions are provided. Let us lay the ground work.

A \emph{lattice family} $\mathcal{L}$ over a finite ground set~$V$ is one where for all $U,W\in \mathcal{L}$ we have $U\cap W,U\cup W\in \mathcal{L}$. Then the lattice family $\mathcal{L}$ can be provided compactly as follows. Let $L,M$ be the inclusionwise minimal and maximal sets in $\mathcal{L}$, respectively. Define the relation $\preceq$ on $V$ as follows: $u\preceq v$ if every set in $\mathcal{L}$ that contains $v$ also contains $u$. It can be readily checked that $\preceq$ is a preorder, that is, it is reflexive and transitive. Furthermore, it can be readily checked that $U\in \mathcal{L}$ if and only if $L\subseteq U\subseteq M$ and $U$ is a lower ideal for $\preceq$ (that is, if $v\in U$ and $u\preceq v$, then $u\in U$). Subsequently, $\mathcal{L}$ is fully characterized by $L,M$ and $\preceq$, implying in turn that every lattice family can be described compactly~(see \S49.3 of \cite{Schrijver03} for more). We say that the lattice family $\mathcal{L}$ is \emph{well-provided} if it is described by $L,M$ and $\preceq$.

Let $\mathcal{C}$ be a crossing family over ground set $V$. It can be readily seen that for every pair of elements $u,v\in V$, $\mathcal{C}_{uv}:=\{C\in \mathcal{C}:u\in C,v\notin C\}$ is a lattice family~(see \S49.10 of \cite{Schrijver03} for more). Clearly, $\mathcal{C}$ can be fully described by all these lattice families. We say that the crossing family $\mathcal{C}$ is \emph{well-provided} if all the lattice families $\mathcal{C}_{uv}, u,v\in V,u\neq v$ are well-provided.

Given a well-provided crossing family $\mathcal{C}$, and a crossing submodular function $f:\mathcal{C}\to \mathbb{Z}$, a \emph{value oracle} for $f$ is an oracle which, given $X\in \mathcal{C}$, outputs in unit time the value of $f(X)$. We are ready to state a preliminary result.

\begin{theorem}\label{thm:strong-poly}
There exists an algorithm that, given well-provided crossing families $\mathcal{C}_i,i=1,2$ over ground set $V$, crossing submodular functions $f_i:\mathcal{C}_i\to \mathbb{Z},i=1,2$ provided via value oracles, an integer $k$, and $w\in \mathbb{Z}^V$, outputs an extreme optimal solution of $\max\{w^\top x \,:\, x(V)= k,\, x(U)\leq f_i(U)~\forall U\in \mathcal{C}_i,\, i=1,2\}$ in oracle strongly polynomial time.
\end{theorem}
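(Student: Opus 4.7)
The plan is to cast the problem as optimization over the intersection of two submodular systems with a common rank constraint, and then invoke a known strongly polynomial algorithm from the submodular intersection literature. The only nontrivial technical step is passing from \emph{crossing} submodular functions on crossing families to the \emph{fully submodular on a lattice family} setting required by standard algorithms; the rest is a black-box application.

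First I would build a strongly polynomial separation oracle for each of the two submodular systems. By hypothesis, the crossing family $\mathcal{C}_i$ is well-provided, i.e.\ for every ordered pair $u, v \in V$ with $u \neq v$ the lattice family $\mathcal{C}_{i,uv} = \{U \in \mathcal{C}_i : u \in U, v \notin U\}$ is given by its minimal element, maximal element, and associated preorder. Restricted to $\mathcal{C}_{i,uv}$, the function $f_i$ is a fully submodular function on a lattice family, so for any candidate $\bar x \in \mathbb{R}^V$ the problem $\min\{f_i(U) - \bar x(U) : U \in \mathcal{C}_{i,uv}\}$ is a classical submodular function minimization problem on a lattice, solvable in strongly polynomial time using the value oracle for $f_i$ (see Chapter~45 and \S49 of \cite{Schrijver03}). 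Since every nonempty proper subset in $\mathcal{C}_i$ belongs to some $\mathcal{C}_{i,uv}$, a most-violated inequality from the $i$-th system can be found by $O(|V|^2)$ such minimizations, producing a strongly polynomial separation oracle.

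Next I would solve the LP itself. By \Cref{box-TDI}, the polyhedron $\{x : x(V) = k,\ x(U) \leq f_i(U)~\forall U \in \mathcal{C}_i,\, i = 1, 2\}$ is box-integral and the dual of $\max\{w^\top x\}$ over it has an integral optimum. The algorithmic form of the intersection theorem for submodular systems (see \S47 and \S49.11 of \cite{Schrijver03}, together with the submodular flow algorithms of Chapter~60), when fed the two separation oracles from the previous paragraph, produces an integral extreme optimal solution $x^\star$ using polynomially many oracle calls and arithmetic operations on numbers of polynomially bounded size. Combining the two strongly polynomial subroutines yields the overall strongly polynomial guarantee.

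The main obstacle is precisely the crossing-to-lattice-family reduction: on a crossing family alone, the function $f_i$ need not be submodular on the usual ``compare any two sets'' relation, so standard submodular minimization cannot be applied directly. This is why the theorem assumes the crossing families are well-provided; the compact encoding by minimal/maximal sets and preorders is exactly the data needed to decompose $\mathcal{C}_i$ into polynomially many lattice families and invoke strongly polynomial submodular minimization on each. Once this reduction is set up, the remainder of the proof is a clean invocation of existing algorithms and of \Cref{box-TDI}.
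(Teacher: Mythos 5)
Your reduction to lattice families and submodular function minimization via the pieces $\mathcal{C}_{i,uv}$ is sound and is essentially how the paper handles the crossing-to-fully-submodular passage (via \S49.3, \S49.7, \S49.10 of \cite{Schrijver03} and the construction in the proof of Theorem~47.4 there). However, your proposal has a genuine gap at the final step: you assert that the ``algorithmic form of the intersection theorem,'' fed these oracles, ``produces an integral extreme optimal solution.'' This is precisely the point the theorem is about, and it does not follow from a black-box invocation. The strongly polynomial algorithms for optimizing over the intersection of two (extended) polymatroids --- e.g.\ Lawler's algorithm (Theorem~47.1 of \cite{Schrijver03}) --- are guaranteed to return \emph{an} integral optimal solution, since the underlying polyhedron is integral by \Cref{box-TDI}, but they are not guaranteed to return a \emph{vertex} of that polyhedron. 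If the optimal face $\widetilde{F}$ has positive dimension, an arbitrary integral optimum in $\widetilde{F}$ need not be extreme, and the application in the complexity section (finding an integral $b$ that is a vertex of $\widetilde{F}$) genuinely requires extremality.

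The paper closes this gap with an explicit lexicographic refinement: after reducing to two fully submodular functions $f''_1, f''_2$ on $2^V$ whose common bases are exactly $\widetilde{F}$, it iteratively maximizes one coordinate at a time over the current face (using Theorem~47.2 of \cite{Schrijver03}, due to Frank), fixes that coordinate at its optimal value $\alpha$, and restricts to the lower-dimensional problem on $V\setminus\{n\}$ with the updated functions $g_i(U)=\min\{f''_i(U),\, f''_i(U\cup\{n\})-\alpha\}$. Verifying that the $g_i$ remain submodular and that the recursion tracks common bases correctly is the small but necessary technical content your proposal omits. Your separation-oracle machinery is fine as a warm-up but neither delivers extremality on its own nor obviously yields the explicit function values needed for the polymatroid-intersection subroutine; you would still need to add the lexicographic maximization (or some equivalent perturbation/refinement argument) to complete the proof.
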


Before we provide a proof sketch of this theorem, let us stress that it is well-known that $\max\{w^\top x\,:\, x(V)= k,\, x(U)\leq f_i(U)~\forall U\in \mathcal{C}_i,\, i=1,2\}$ is strongly polynomial time solvable; see for instance \S47.4 of \cite{Schrijver03}. Though the statement above is widely accepted, none of the algorithms we could find in the literature necessarily return an extreme optimal solution. However, this issue can be addressed by finding a lexicographically maximal optimal solution.

\begin{proof}[Proof sketch.]
Let $\widetilde{P}:=\{x\in \mathbb{R}^V \,:\, x(V)= k,\, x(U)\leq f_i(U)~\forall U\in \mathcal{C}_i,i=1,2\}$, and denote by $\widetilde{F}$ the optimal face of $\max\{w^\top x \,:\, x\in \widetilde{P}\}$. It follows from \S49.3, \S49.7, and \S49.10 of \cite{Schrijver03} that one can construct, in strongly polynomial time, submodular functions $f'_i$, $i=1,2$ defined over $2^V$ such that $\widetilde{F}$ is the optimal face of $\max\{w^\top x \,:\, x(V)=f'_i(V),\, x(U)\le f'_i(U)~~\forall~U\subset V,\, i=1,2 \}$.
The proof of Theorem 47.4 of  \cite{Schrijver03} shows how to construct, in strongly polynomial time, submodular functions $f''_i$, $i=1,2$ defined over $2^V$ such that
$$\widetilde{F}=\left\{x\in \mathbb{R}^V\,:\, x(V)=f''_i(V),\, x(U)\le f''_i(U)~~\forall~U\subset V,\, i=1,2\right\}.$$
This shows that the original problem can be reduced, in strongly polynomial time, to the problem of finding an extreme common base of two extended polymatroids. While there exist strongly polynomial time algorithms that can provide a common base~\cite{Lawler82} (see Theorem 47.1 in  \cite{Schrijver03}), the common base returned by these algorithms may not be extreme. However, for any given $v\in V$, one can in strongly polynomial time find a point in $\widetilde{F}$ maximizing $x_v$~\cite{Frank84a} (see Theorem 47.2 of \cite{Schrijver03}). This implies, as we discuss next, that we can pick an ordering $1,2,\ldots,n$ of the elements in $V$ and compute a lexicographically maximal point in $\widetilde{F}$, which will therefore be extreme. Indeed, we start by computing the value $\alpha:=\max\{x_n: x \in \widetilde{F}\}$. To iterate, we need to compute the largest value of $x_{n-1}$ for $x\in \widetilde{F}\cap \{x : x_n=\alpha\}$. This can be reduced again to the problem of finding a common base maximizing $x_{n-1}$ for two extended polymatroids defined over $\{1,\ldots,n-1\}$, where the functions $g_i, i=1,2$ are defined over $2^{\{1,\ldots,n-1\}}$ by $g_i(U)=\min\{f''_i(U), f''_i(U\cup \{n\})-\alpha\}$ for all $U\subseteq V\setminus \{n\}$. It can be readily checked that $g_i, i=1,2$ are indeed still submodular, and furthermore, a point $z\in \mathbb{R}^{\{1,\ldots,n-1\}}$ is a common base of $g_1$ and $g_2$ if and only if $(z,\alpha)$ is a common base of $f''_1$ and $f''_2$. Hence we are able to apply Theorem 47.2 of \cite{Schrijver03} iteratively, repeating the process for $1,\ldots,n-2$, and so on and so forth.
\end{proof}

\paragraph{Complexity aspects of \Cref{main-0}.} To discuss the complexity aspects of part (b) of this theorem, we assume that for $i=1,2$, the crossing family $\mathcal{C}_i$ is well-provided, and $f_i$ is provided via a value oracle. Furthermore, the non-empty face of $P$ is provided by some vector $c\in \mathbb{Z}^A$, namely, the face $F$ is the optimal face of $\max\{c^\top y:y\in P\}$.

The first question is whether the cut condition \eqref{eq:cut-condition} can be verified in polynomial time. We do not know. The verification amounts to determining whether $g(U):=u(\delta^+(U))-\ell(\delta^-(U))-\min_{i=1,2} f_i(U)$ $\forall U\subsetneq V,\, U\neq \emptyset$ is a nonnegative function. However, $g$ is not necessarily a crossing submodular function, making the verification problem challenging. That said, there is a way to circumvent this issue altogether.

Following the proof of \Cref{main-0}~(b), we can show that in strongly polynomial time, we can either find an integral point $y^\star\in F$ such that $\ell\leq y^\star\leq u$, or a subset $U\subseteq V$ that violates the cut condition~\eqref{eq:cut-condition}.

First we compute $w\in \mathbb{Z}^V$ such that $w^\top M=c^\top$, which can be done in strongly polynomial time. Then, if we let $\widetilde{P}$ and $\widetilde{F}$ be defined as in the proof of \Cref{main-0}, we have that $\widetilde{F}$ is the optimal face of $\max\{w^\top x:x\in \widetilde{P}\}$. Next, we find a vertex $b$ of $\widetilde{F}$ in strongly polynomial time, by \Cref{thm:strong-poly}. Note that $b$ ought to be integral by \Cref{box-TDI}. Then we find either an integral $b$-transhipment $y^\star$ satisfying $\ell \le y^\star\le u$, or a certificate that such a $b$-transhipment does not exist, in the form of a set $U$, $U\subsetneq V,U\neq \emptyset$, such that $b(U)> u(\delta^+(U))-\ell(\delta^-(U))$; this can be done in strongly polynomial time (see Corollary 12.2d of \cite{Schrijver03}). In the first case, we get an integral point $y^\star$ in $F\cap\{y\in\mathbb{R}^A : \ell \le y\le u\}$, and in the second case, we have a certificate $U$ that \eqref{eq:cut-condition} is not satisfied.

\paragraph{Complexity aspects of \Cref{main-1}.}
First, we claim that the assumed inequalities $d_A^+(U)+(\frac{\tau}{k}-1)d_A^-(U)\geq \tau$ for all $U\subsetneq V, U\neq \emptyset$, can be verified in strongly polynomial time.
To elaborate, note that the inequalities are satisfied if, and only if, $\bar{y}=\frac{k}{\tau}\cdot \1$ satisfies $y(\delta^+(U))-y(\delta^-(U))\leq d^+_A(U)-k=f_2(U)$ for all $U\in \mathcal{C}_2=\{U\subsetneq V: U\neq \emptyset\}$. This holds if, and only if, $\min\{g(U):U\in \mathcal{C}_2\}\geq 0$ for the crossing submodular function $g$ over the crossing family $\mathcal{C}_2$ defined as $g(U):=f_2(U) - \bar{y}(\delta^+(U))+\bar{y}(\delta^-(U))$. The claim now follows from the fact that the minimization problem can be solved in strongly polynomial time~(see \S49.10 of \cite{Schrijver03} for details).

Secondly, \Cref{main-1} also assumes the inequalities $f(U)\geq \frac{k}{\tau}(d_A^+(U)-d_A^-(U))$ for all $U\in \mathcal{C}$. To be able to verify these inequalities in polynomial time, we would need some assumptions on how the data is provided. As such, suppose $\mathcal{C}$ is a well-provided crossing family, and $f$ is provided via a value oracle. Then, just as above, the inequalities can be verified in strongly polynomial time~(see \S49.10 of \cite{Schrijver03}).

Finally, with the assumptions mentioned above on $\mathcal{C}=\mathcal{C}_1$ and $f=f_1$, we can find the $k$-arc-connected flip $J$ in strongly polynomial time, by an application of the algorithm we provided for \Cref{main-0}~(b). To describe the algorithm, first we find $b\in \mathbb{Z}^V$ such that $\1^\top b=0$, $b(U)\leq f_i(U)$ for all $U\in \mathcal{C}_i$, then we find a $b$-transshipment $y^\star\in \{0,1\}^A$. The first step relies on the fact that every vertex of $\{x\,:\, \1^\top x=0,\, x(U)\leq f_i(U)~~\forall~U\in \mathcal{C}_i,\, i=1,2\}$ is integral by \Cref{box-TDI}, and that a vertex $b$ can be found in strongly polynomial time by \Cref{thm:strong-poly}. The second step can also be done in strongly polynomial time~(see Corollary 12.2d of \cite{Schrijver03}). The subset $J$ corresponds to the support of~$y^\star$.

\section{Open questions and concluding remarks}\label{sec:conclusion}

\Cref{main-3-CO} connects intriguingly to a conjecture of Chudnovsky, Edwards, Kim, Scott, and Seymour~\cite{Chudnovsky16} for $0,1$-weighted digraphs $(D,w)$, that if every cut has nonzero weight and every dicut has weight at least $\tau$, then $\{a\in A:w_a=1\}$ can be decomposed into $\tau$ dijoins of $D$. In fact, another result of ours, namely \Cref{thm:integral_intersection_weakly_connected}, has a direct application to this conjecture for $\tau=2$. In this case, the conjecture can be equivalently formulated as below.

\begin{CN}[\cite{Chudnovsky16}]\label{spanning-tree-CN}
Let $G=(V,E)$ be a tree, and let $\mathcal{L}$ be a lattice family over ground set $V$ such that $|\delta(U)|\geq 2$ for all $U\in \mathcal{L}\setminus \{\emptyset,V\}$. Then there exists an orientation $D$ of $G$ such that $\delta_D^+(U),\delta_D^-(U)\neq \emptyset$ for all $U\in \mathcal{L}\setminus \{\emptyset,V\}$.
\end{CN}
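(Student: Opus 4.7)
My plan is to apply \Cref{thm:integral_intersection_weakly_connected} to a carefully chosen intersection of two submodular flow systems. Fix an arbitrary orientation $D=(V,A)$ of $G$; since $G$ is a tree, $D$ is weakly connected. A vector $y\in\{0,1\}^A$ indicating which arcs of $D$ to flip produces an orientation satisfying the conclusion if and only if, for every $U\in \mathcal{L}\setminus\{\emptyset,V\}$,
$$y(\delta^+(U))-y(\delta^-(U))\le d_D^+(U)-1\quad\text{and}\quad y(\delta^-(U))-y(\delta^+(U))\le d_D^-(U)-1.$$
Set $\mathcal{C}_1:=\mathcal{L}\setminus\{\emptyset,V\}$ with $f_1(U):=d_D^+(U)-1$ and $\mathcal{C}_2:=\{V\setminus U:U\in \mathcal{L}\setminus\{\emptyset,V\}\}$ with $f_2(V\setminus U):=d_D^-(U)-1=d_D^+(V\setminus U)-1$. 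Each $\mathcal{C}_i$ is a crossing family and each $f_i$ is crossing submodular; the two inequalities above are precisely the associated submodular flow system \eqref{eq:sub_flow_intersection}. Let $P$ denote the resulting polyhedron.

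Next I would check $P\neq \emptyset$ by verifying that $\bar{y}:=\frac{1}{2}\mathbf{1}$ lies in $P$. Indeed, the inequality $\frac{1}{2}(d_D^+(U)-d_D^-(U))\le d_D^+(U)-1$ rearranges to $d_D^+(U)+d_D^-(U)\ge 2$, which is the hypothesis $|\delta_G(U)|\ge 2$; the symmetric verification applies to every $V\setminus U\in \mathcal{C}_2$. Since $D$ is weakly connected, \Cref{thm:integral_intersection_weakly_connected} yields that $P$ is integral, and in particular $P\cap \mathbb{Z}^A\neq \emptyset$.

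The main difficulty is strengthening this to $P\cap \{0,1\}^A\neq \emptyset$. A direct appeal to \Cref{main-0}(b) with $\ell=\mathbf{0}$, $u=\mathbf{1}$ fails, since the cut condition $\min_{i=1,2}f_i(U)\le d_D^+(U)$ is vacuously violated at every $U\subsetneq V$, $U\neq \emptyset$ with $U\notin \mathcal{C}_1\cup \mathcal{C}_2$. To circumvent this, I would exploit the tree structure: because $D$ is an orientation of a tree, the $b$-transshipment is unique for each $b\in \widetilde{P}\cap\mathbb{Z}^V$, with $y_a=b(V_u)$ for each arc $a=u\to v$ and $V_u$ the $u$-side of $G-a$. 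Hence $y\in \{0,1\}^A$ precisely when $b(V_u)\in\{0,1\}$ for every fundamental cut $V_u$, and the family of half-trees of $G$ is cross-free. The plan is to leverage the box-integrality of $\widetilde{P}$ from \Cref{box-TDI}, together with this cross-free family, to locate a $b$ whose fundamental-cut sums lie in $\{0,1\}$; one would do this either by augmenting one of the $\mathcal{C}_i$ with half-tree constraints or by appealing to the extension \Cref{general-main-0} with a larger totally unimodular matrix encoding both the incidence of $D$ and the fundamental-cut incidences. The principal hurdle, which I expect to be the hardest step, is reconciling the $\mathcal{L}$-type constraints with the half-tree constraints so that crossing submodularity is preserved after merging.
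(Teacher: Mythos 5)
You are attempting to prove a statement that the paper does \emph{not} prove: \Cref{spanning-tree-CN} is stated as an open conjecture (from \cite{Chudnovsky16}), and the paper's discussion of it lives in the ``Open questions'' section. What the paper does is exactly what you reproduce in your first two paragraphs, in a mildly reformulated but equivalent way: it writes the conclusion as the intersection of two submodular flow systems over an arbitrary orientation $D$ of the tree, checks that $\frac12\1$ is feasible, and invokes \Cref{thm:integral_intersection_weakly_connected} to get TDI-ness and hence an integral solution. The paper then explicitly observes that what the conjecture asserts is the existence of a $0,1$ solution, and it does not close that gap; that is the open content.

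Your analysis of why a direct application of \Cref{main-0}(b) with $\ell=\0,u=\1$ fails is correct: for any $U\subsetneq V,\,U\neq\emptyset$ with $U\notin\mathcal{C}_1\cup\mathcal{C}_2$, one has $\min_{i=1,2}f_i(U)=+\infty$ by convention, so the cut condition \eqref{eq:cut-condition} is violated. Your further observation that on a tree the $b$-transshipment is unique and $y_a=b(V_u)$, so that $y\in\{0,1\}^A$ iff $b$ takes values in $\{0,1\}$ on all fundamental half-trees, is correct and goes beyond what the paper states. However, the last paragraph of your proposal is a research plan, not an argument, and you say so yourself. The two remedies you float both hit a real obstacle: appending the half-tree constraints $0\le b(V_u)\le 1$ to $\mathcal{C}_1$ or $\mathcal{C}_2$ destroys the crossing-family property in general (a half-tree and a set $L\in\mathcal{L}$ may cross, and neither $V_u\cap L$ nor $V_u\cup L$ need lie in the augmented family), while treating the half-tree constraints as a third system falls outside the scope of both \Cref{main-0} and \Cref{general-main-0}, which accommodate only two crossing families---and, as \S\ref{sec:two-subs} shows, three is already hard in general. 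So your proposal has a genuine gap, and it coincides exactly with the gap the paper deliberately leaves open.
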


To point out the connection to \Cref{thm:integral_intersection_weakly_connected}, let $D=(V,A)$ be an arbitrary orientation of $G$. Consider the system \begin{align*}
y(\delta_D^+(U))-y(\delta_D^-(U))&\leq d_A^+(U)-1 \quad \forall U\in \mathcal{L}\setminus \{\emptyset,V\}\\
 y(\delta_D^+(U))-y(\delta_D^-(U))&\geq 1-d_A^-(U) \quad \forall U\in \mathcal{L}\setminus \{\emptyset,V\}.
\end{align*} Observe that $y=\frac12 \cdot \1$ is a feasible solution. Observe further that $\mathcal{L}\setminus \{\emptyset,V\}$ is a crossing family. Since $D$ is weakly connected, it follows from \Cref{thm:integral_intersection_weakly_connected} that the system is TDI, and so it has an integral solution. \Cref{spanning-tree-CN} states equivalently that the system has a $0,1$ solution.

The next open question comes from \Cref{thm:integral_intersection_weakly_connected}. We saw that the weak connectivity assumption could not be dropped due to \Cref{bad-example}. However, this example is not only disconnected but has three connected components in its underlying undirected graph. An immediate open question is whether \Cref{thm:integral_intersection_weakly_connected} extends beyond weakly connected digraphs to those digraphs with at most two connected components in their underlying undirected graph?

Another open question comes from a closer look at \Cref{main-2}. The theorem provides a sufficient condition for decomposing the arc set of a digraph $D$ into a $k$-arc-connected flip and a $(\tau-k)$-dijoin. Clearly, to be able to do this, two conditions are necessary: (a) the underlying undirected graph of $D$ must be $2k$-edge-connected, and (b) every dicut of $D$ must have size at least $\tau$. Are these two conditions also sufficient?


Finally, as we show in \S\ref{sec:fractional} of the appendix, the intersection of two submodular flow systems with integral right-hand sides, is not necessarily box-integral. In March 2023, during a talk at the Combinatorics and Optimization workshop at ICERM, Brown University, the first author made the following ``wild" conjecture:
\emph{
Let $D=(V,A)$ be a weakly connected digraph and, for $i=1,2$, let $\mathcal{C}_i$ be a crossing family over ground set $V$ and $f_i:\mathcal{C}_i\to \mathbb{Z}$ be a crossing submodular function. Then the polyhedron $P$ from \eqref{eq:sub_flow_intersection} is box-half-integral.
}
However, this conjecture has been refuted by Goemans and Pan~\cite{Goemans23+}.

\section*{Acknowledgements}

We would like to thank Krist\'{o}f B\'{e}rczi, Tam\'{a}s Kir\'{a}ly, and Zolt\'{a}n Szigeti for the fruitful discussions regarding the content of this paper. Special thanks to three anonymous referees whose detailed comments vastly improved the presentation of this paper. This work was supported in part by ONR grant N00014-22-1-2528, and EPSRC grant EP/X030989/1.

{\small \bibliographystyle{abbrv}\bibliography{references}}

\begin{thebibliography}{10}

\bibitem{Abdi22+}
A.~Abdi, G.~Cornu{\'e}jols, and M.~Zlatin.
\newblock On packing dijoins in digraphs and weighted digraphs.
\newblock {\em SIAM Journal on Discrete Mathematics}, 37(4):2417--2461, 2023.

\bibitem{Akiyama80}
T.~Akiyama, T.~Nishizeki, and N.~Saito.
\newblock {NP}-{C}ompleteness of the {H}amiltonian cycle problem for bipartite
  graphs.
\newblock {\em Journal of Information Processing}, 3(2):73--76, 1980.

\bibitem{Chervet21}
P.~Chervet, R.~Grappe, and L.-H. Robert.
\newblock Box-total dual integrality, box-integrality, and equimodular
  matrices.
\newblock {\em Mathematical Programming}, 188(1):319--349, 2021.

\bibitem{Chudnovsky16}
M.~Chudnovsky, K.~Edwards, R.~Kim, A.~Scott, and P.~Seymour.
\newblock Disjoint dijoins.
\newblock {\em Journal of Combinatorial Theory, Series B}, 120:18--35, 2016.

\bibitem{Edmonds77}
J.~Edmonds and R.~Giles.
\newblock A min-max relation for submodular functions on graphs.
\newblock In {\em Studies in integer programming ({P}roc. {W}orkshop, {B}onn,
  1975)}, pages 185--204. Ann. of Discrete Math., Vol. 1, 1977.

\bibitem{Frank80}
A.~Frank.
\newblock On the orientation of graphs.
\newblock {\em Journal of Combinatorial Theory, Series B}, 28(3):251--261,
  1980.

\bibitem{Frank84a}
A.~Frank.
\newblock Finding feasible vectors of {E}dmonds-{G}iles polyhedra.
\newblock {\em Journal of Combinatorial Theory, Series B}, 36(3):221--239,
  1984.

\bibitem{Frank11}
A.~Frank.
\newblock {\em Connections in Combinatorial Optimization}.
\newblock Oxford Lecture Series in Mathematics and Its Applications. Oxford
  University Press, 2011.

\bibitem{Frank88}
A.~Frank and {\'E}.~Tardos.
\newblock Generalized polymatroids and submodular flows.
\newblock {\em Mathematical Programming}, 42(1):489--563, 1988.

\bibitem{Garey79}
M.~Garey and D.~Johnson.
\newblock {\em Computers and Intractability: A Guide to the Theory of
  NP-completeness}.
\newblock Mathematical Sciences Series. Freemans, 1979.

\bibitem{Goemans23+}
M.~X. Goemans and Y.~Pan.
\newblock A counterexample to box-half-integrality of the intersection of
  crossing submodular flow systems.
\newblock Manuscript, June 2023.

\bibitem{Hoffman74}
A.~J. Hoffman.
\newblock A generalization of max flow-min cut.
\newblock {\em Mathematical Programming}, 6(1):352--359, 1974.

\bibitem{Ito23}
T.~Ito, Y.~Iwamasa, N.~Kakimura, N.~Kamiyama, Y.~Kobayashi, S.-I. Maezawa,
  Y.~Nozaki, Y.~Okamoto, and K.~Ozeki.
\newblock Monotone edge flips to an orientation of maximum edge-connectivity
  \`{a} la {N}ash-{W}illiams.
\newblock {\em ACM Trans. Algorithms}, 19(1), 2023.

\bibitem{Lawler82}
E.~L. Lawler and C.~U. Martel.
\newblock Computing maximal "polymatroidal" network flows.
\newblock {\em Mathematics of Operations Research}, 7(3):334--347, 1982.

\bibitem{Lucchesi78}
C.~L. Lucchesi and D.~H. Younger.
\newblock A minimax theorem for directed graphs.
\newblock {\em J. London Math. Soc. (2)}, 17(3):369--374, 1978.

\bibitem{Meszaros18}
A.~M{\'e}sz{\'a}ros.
\newblock A note on disjoint dijoins.
\newblock {\em Combinatorica}, 38(6):1485--1488, 2018.

\bibitem{Nash-Williams61}
C.~S. J.~A. Nash-Williams.
\newblock Edge-disjoint spanning trees of finite graphs.
\newblock {\em Journal of the London Mathematical Society}, 36:445--450, 1961.

\bibitem{Nash-Williams69}
C.~S. J.~A. Nash-Williams.
\newblock Well-balanced orientations of finite graphs and unobtrusive
  odd-vertex-pairings.
\newblock In {\em Recent progress in Combinatorics (Proc. Third Waterloo Conf.
  on Combinatorics, 1968)}, pages 133--149. Academic Press, 1969.

\bibitem{Schrijver80}
A.~Schrijver.
\newblock A counterexample to a conjecture of {E}dmonds and {G}iles.
\newblock {\em Discrete Math.}, 32(2):213--215, 1980.

\bibitem{Schrijver84}
A.~Schrijver.
\newblock Total dual integrality from directed graphs, crossing families, and
  sub- and supermodular functions.
\newblock In W.~R. Pulleyblank, editor, {\em Progress in Combinatorial
  Optimization}, pages 315--361. Academic Press, 1984.

\bibitem{Schrijver03}
A.~Schrijver.
\newblock {\em Combinatorial Optimization. Polyhedra and Efficiency.}
\newblock Springer, Berlin, Heidelberg, 2003.

\bibitem{Tutte61}
W.~T. Tutte.
\newblock On the problem of decomposing a graph into $n$ connected factors.
\newblock {\em Journal of the London Mathematical Society}, 36:221--230, 1961.

\bibitem{Woodall78}
D.~Woodall.
\newblock Menger and {K}{\"o}nig systems.
\newblock In Y.~Alavi and D.~Lick, editors, {\em Theory and Applications of
  Graphs.}, volume 642 of {\em Lecture Notes in Mathematics}. Springer, Berlin,
  Heidelberg, 1978.

\end{thebibliography}

\appendix

\section{Hardness for three base systems, and two submodular flow systems}\label{sec:two-subs}

\paragraph{Integral solutions to three base systems.} Take three matroids over the same (finite) ground set $V$ with rank functions $r_1,r_2,r_3$, respectively, where the functions are given via an oracle (which for every $X\subseteq V$ outputs $r_i(X)$ in unit time). Suppose further $r_i(V)=r$ for all $i$, and that $r_1(V\setminus u)=r$ and $r_1(u)=1$ for all $u\in V$. It is known that finding a common basis of the three matroids is a hard problem in general. For example, it includes the NP-complete problems \emph{Does a bipartite graph have a Hamilton cycle?}~\cite{Akiyama80} and \emph{Does a digraph have a Hamilton $st$-dipath?}~(see \S3.1.3 of \cite{Garey79}).

Consider the intersection of the three base systems $
\1^\top x=r, x(U)\leq r_i(U)~\forall U\subseteq V, i=1,2,3$. Suppose $x\in \mathbb{Z}^V$ is an integral solution. It can be readily checked, by using the assumption that $r_1(u)=1$ and $r_1(V\setminus u)=r$ for all $u\in V$, that $x\in \{0,1\}^V$, and $\{u\in V:x_u=1\}$ is a common basis of $M_1,M_2,M_3$. Since finding a common basis of three matroids is hard, finding a solution to three base systems in $\mathbb{Z}^A$ is also hard.

\paragraph{Integral solutions to two submodular flow systems.} Let $D=(V,A)$ be a digraph, and let $f_i:\mathcal{C}_i\to \mathbb{Z}, i=1,2$ be two crossing submodular functions. Consider the system $y(\delta^+(U))-y(\delta^-(U))\leq f_i(U)~\forall U\in \mathcal{C}_i,i=1,2$. Finding an integral solution to this system is a hard problem.

To see this, let $M_i,i=1,2,3$ be the matroids as above, with rank functions $r_i,i=1,2,3$, respectively. Let $V^\star$ be a copy of $V$. Let $D$ be the digraph over vertex set $V\cup V^\star$, and arc set $A:=\{(u,u^\star):u\in V\}$. Let $\mathcal{C}:=\{U:U\subseteq V\}\cup \{V\cup \overline{U^\star}:U\subseteq V\}\cup \{V^\star\}$, where $U^\star\subseteq V^\star$ corresponds to the subset $U\subseteq V$, and $\overline{U^\star}=V^\star-U^\star$. It can be readily checked that $\mathcal{C}$ is a crossing family.

Define $f:\mathcal{C}\to \mathbb{Z}$ as follows: $f(U):=r_1(U)$ for all $U\subseteq V$, $f(V\cup \overline{U^\star}):=r_2(U)$ for all $U\subseteq V$, and $f(V^\star):=-r_1(V)$ (note $f(V)=r$ and $f(V^\star)=-r$). It can be readily checked that $f$ is a crossing submodular function over $\mathcal{C}$. Suppose $y\in \mathbb{Z}^A$ is an $f$-submodular flow in $D$. It can be readily checked, by using the assumption that $r_1(u)=1$ and $r_1(V\setminus u)=r$ for all $u\in V$, that $y\in \{0,1\}^A$, and $\{u\in V:y_{(u,u^\star)}=1\}$ is a common basis of $M_1,M_2$.
Conversely, for every common basis $B$ of $M_1,M_2$, the incidence vector of $\{(u,u^\star):u\in B\}$ is an $f$-submodular flow in $\{0,1\}^A$.
Subsequently, there exists an $f$-submodular flow in $\mathbb{Z}^A$ if and only if $M_1,M_2$ have a common basis.

Similarly, define $g:\mathcal{C}\to \mathbb{Z}$ as follows: $g(U):=r_1(U)$ for all $U\subseteq V$, $f(V\cup \overline{U^\star}):=r_3(U)$ for all $U\subseteq V$, and $g(V^\star):=-r_1(V)$. Then $g$, too, is a crossing submodular function over $\mathcal{C}$, and there exists a $g$-submodular flow in $\mathbb{Z}^A$ if and only if $M_1,M_3$ have a common basis.

Putting it altogether, we get that there exists a $y\in \mathbb{Z}^A$ that is both an $f$- and $g$-submodular flow if and only if $M_1,M_2,M_3$ have a common basis. Since finding a common basis of three matroids is hard, finding a solution to two sets of submodular flow constraints in $\mathbb{Z}^A$ is also hard.\footnote{Our argument can easily be adapted to show that finding an integral solution to the intersection of two submodular flow systems, contains the problem of finding a common basis of four matroids.}

\section{\Cref{main-0} and box constraints}\label{sec:fractional}

Here we demonstrate that the system \eqref{eq:sub_flow_intersection} from \Cref{main-0}, together with box constraints, is not necessarily integral. To this end, consider the digraph $D=(V,A)$ displayed in \Cref{fig:fractional} (left).

\begin{figure}[ht]
\centering
\includegraphics[scale=0.3]{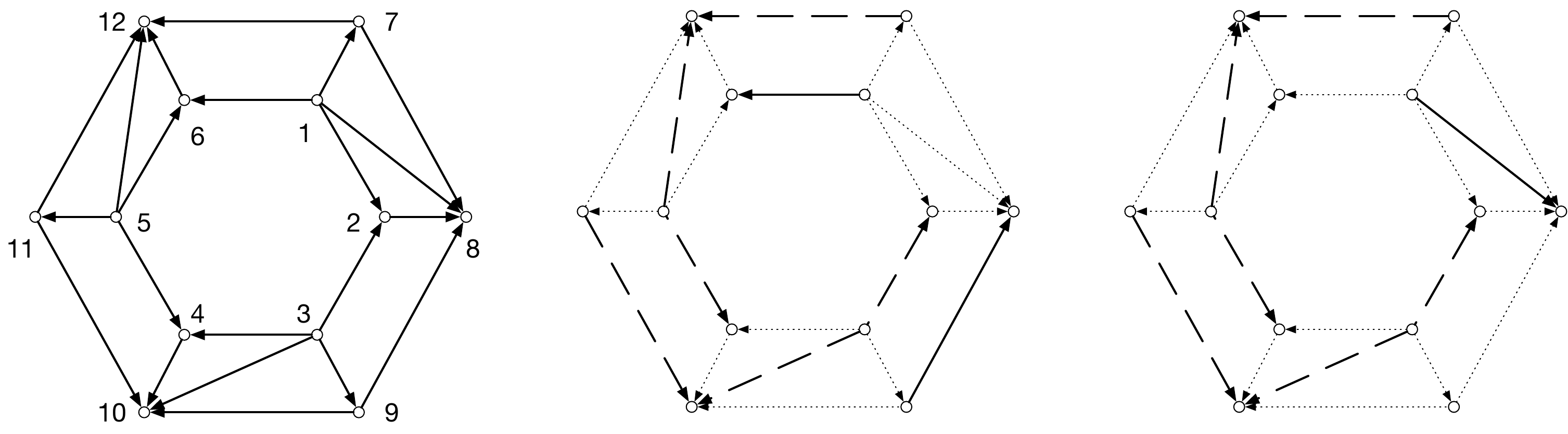}
\caption{
Left: A vertex-labelled digraph $D=(V,A)$. Middle/Right: Representations of two fractional vertices $y^1/y^2$ of the polytope  $Q=P\cap [\0, \1]$, where solid arcs are set to $1$, dotted arcs to $0$, and dashed arcs to $\frac12$.}
\label{fig:fractional}
\end{figure}

Define the crossing families $\mathcal{C}_1:=\{U\subsetneq V: \delta^-(U)=\emptyset, U\neq \emptyset\}$ and $\mathcal{C}_2:=\{U\subsetneq V:U\neq \emptyset\}$, and the crossing submodular functions $f_1(U):=d_A^+(U)-3~\forall U\in \mathcal{C}_1$ and $f_2(U):=d_A^+(U)-1~\forall U\in \mathcal{C}_2$. Clearly $f_2(U)\leq d_A^+(U)~\forall U\in \mathcal{C}_2$. Thus, $\ell=\0$ and $u=\1$ satisfy the cut condition~\eqref{eq:cut-condition}. Let $P$ be the polyhedron defined as in~\eqref{eq:sub_flow_intersection}, and  $Q:=P\cap [\0, \1]$.

To give some intuition, a $0,1$ vector $\bar{y}$ belongs to $Q$ if, and only if, $\{a\in A:\bar{y}_a=0\}$ is a $3$-dijoin and $\{a\in A:\bar{y}_a=1\}$ is a $1$-arc-connected flip of $D$. This gives a description of all the integral vertices of $Q$. However, $Q$ may have fractional vertices. To see this, define $y^1\in \left\{0,\frac12,1\right\}^A$ where for each $a\in A$, $y^1_a=0,\frac12$ or $1$ if $a$ is dotted, dashed, or solid in \Cref{fig:fractional} (middle), respectively. Define $y^2\in \left\{0,\frac12,1\right\}^A$ analogously with respect to \Cref{fig:fractional} (right).

\begin{PR}\label{pr:fractional}
$y^1,y^2$ are vertices of $Q$.
\end{PR}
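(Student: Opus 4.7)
The plan is, for each $i=1,2$, to verify that $y^i\in Q$ and then to exhibit $|A|$ linearly independent constraints of $Q$ that are tight at $y^i$; this suffices since a point of a polyhedron in $\mathbb{R}^{|A|}$ is a vertex precisely when the tight constraints there span $\mathbb{R}^{|A|}$.

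The first task is feasibility. The box constraints $0\le y^i_a\le 1$ hold by construction. The constraints $y(\delta^+(U))-y(\delta^-(U))\le f_2(U)=d_A^+(U)-1$ for $U\in\mathcal{C}_2$ can be rewritten as $(\1-y^i)(\delta^+(U))+y^i(\delta^-(U))\ge 1$; informally, this says that the flip associated with $y^i$ (where solid arcs are flipped, dotted arcs are kept, and dashed arcs contribute $\tfrac12$ in each direction) is fractionally $1$-arc-connected. The dicut constraints $y(\delta^+(U))\le d_A^+(U)-3$ for $U\in\mathcal{C}_1$ amount to $(\1-y^i)(\delta^+(U))\ge 3$ on every dicut of $D$. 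Both conditions are finite checks over the cuts and dicuts of the small digraph $D$, read off directly from the figure.

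The second task is to exhibit $|A|$ linearly independent tight constraints at $y^i$. Every dotted arc $a$ contributes the tight constraint $y_a=0$ and every solid arc contributes $y_a=1$; these are linearly independent and decouple the coordinates indexed by dashed arcs from those of the remaining arcs. It therefore suffices, for each dashed arc $a$, to exhibit a set $U_a\in\mathcal{C}_1\cup\mathcal{C}_2$ such that the associated submodular flow inequality is tight at $y^i$ and such that the vectors $\chi^{\delta^+(U_a)}-\chi^{\delta^-(U_a)}$, restricted to the dashed coordinates, are linearly independent. These $U_a$ are read off the figure: for each dashed arc one locates either a tight dicut (a set in $\mathcal{C}_1$) or a tight cut (a set in $\mathcal{C}_2$) whose boundary isolates that arc from the other dashed arcs in a distinguishing manner, so that after ordering the $U_a$ appropriately the resulting system becomes triangular in the dashed coordinates and hence of full rank.

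The main difficulty is not conceptual but combinatorial: all verifications are finite checks over the cuts and dicuts of $D$, and the proof is essentially a careful bookkeeping exercise guided by the figure. The two checks (feasibility and rank of tight constraints) must be performed once for each of $y^1$ and $y^2$, but the method is identical.
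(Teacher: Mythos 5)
Your approach is essentially the same as the paper's: verify feasibility, then exhibit $|A|=21$ linearly independent tight constraints, namely the $15$ box constraints at the dotted/solid arcs plus $6$ tight submodular-flow inequalities whose restrictions to the dashed coordinates are linearly independent. The only difference is that the paper names the six sets explicitly for each of $y^1$ and $y^2$ (five in $\mathcal{C}_1$ and one in $\mathcal{C}_2$) rather than appealing to a triangularization argument, but your observation that the box constraints reduce the rank check to the dashed coordinates is exactly the right simplification and is how one would actually verify the paper's list.
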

\begin{proof}
It can be readily checked that $y^1,y^2$ are feasible. To see that $y^i$ is a vertex, we need to exhibit $21$ linearly independent tight inequalities at $y^i$. We immediately get $15$ from $\0\leq y\leq \1$ as $y^i$ has as many coordinates set to $0$ or $1$. For $y^1$ the remaining $6$ may be chosen as the following inequalities: $y(\delta^+(U))-y(\delta^-(U))\leq f_1(U)$ for $U=
\{3\},\overline{\{10\}},\{5\},\overline{\{12\}}, \{1,2,3,7,8,9\}$ and
$y(\delta^+(U))-y(\delta^-(U))\leq f_2(U)$ for $U=\{7,8,9,10,11,12\}$. For $y^2$ they may be chosen as $y(\delta^+(U))-y(\delta^-(U))\leq f_1(U)$ for $U=
\{3\},\overline{\{10\}},\{5\},\overline{\{12\}}, \{1,5,6,7,11,12\}$ and
$y(\delta^+(U))-y(\delta^-(U))\leq f_2(U)$ for $U=\{4,5,6,10,11,12\}$.
\end{proof}

\end{document}